\def\C{\ensuremath{\mathbb{C}}}
\def\P{\ensuremath{\mathbb{P}}}
\def\Z{\ensuremath{\mathbb{Z}}}
\def\CC{\ensuremath{\mathcal C}}
\def\DD{\ensuremath{\mathcal D}}
\def\EE{\ensuremath{\mathcal E}}
\def\KK{\ensuremath{\mathcal K}}
\def\MM{\ensuremath{\mathcal M}}
\def\OO{\ensuremath{\mathcal O}}
\def\UU{\ensuremath{\mathcal U}}
\def\AAA{\mathfrak A}
\def\S{\ensuremath{\mathrm S}}
\def\Aut{\mathop{\mathrm{Aut}}\nolimits}
\def\Db{\mathop{\mathrm{D}^{\mathrm{b}}}\nolimits}
\def\Def{\mathop{\mathrm{Def}}\nolimits}
\def\Ext{\mathop{\mathrm{Ext}}\nolimits}
\def\Hom{\mathop{\mathrm{Hom}}\nolimits}
\def\Spec{\mathop{\mathrm{Spec}}\nolimits}
\def\ext{\mathop{\mathrm{ext}}\nolimits}
\def\Coh{\mathop{\mathrm{Coh}}\nolimits}
\def\Art{\mathop{\mathrm{Art}}\nolimits}
\def\RHom{\mathop{\mathrm{RHom}}\nolimits}
\def\Aut{\mathop{\mathrm{Aut}}\nolimits}
\def\Dp{\mathop{\mathrm{D}_{\text{pug}}}\nolimits}
\def\Forg{\mathop{\mathrm{Forg}}\nolimits}
\def\Ku{\mathop{\mathrm{Ku}}\nolimits}
\theoremstyle{plain}
  \newtheorem{thm}{Theorem}[section]
  \newtheorem{cor}[thm]{Corollary}
  \newtheorem{lem}[thm]{Lemma}
  \newtheorem{prop}[thm]{Proposition}
  \theoremstyle{definition}
\newtheorem{dfn}[thm]{Definition}
\newtheorem{rmk}[thm]{Remark}
\newtheorem{ex}[thm]{Example}
\newtheorem*{str}{Strategy of the proofs}
\newtheorem*{plan}{Plan of the paper}
\newtheorem*{ack}{Acknowledgements}
\title{Some remarks about deformation theory and formality conjecture}
\author{Huachen Chen, Laura Pertusi, Xiaolei Zhao}
\date{ }
\newcommand{\Addresses}{{
  \bigskip
  \footnotesize

  H.~Chen, \textsc{Department of Mathematics, University of California, Santa Barbara, South Hall, Santa Barbara, CA 93106, USA.}\par\nopagebreak
  \textit{E-mail address}: \texttt{huachen@ucsb.edu}.

  \medskip

  L.~Pertusi, \textsc{Dipartimento di Matematica F.\ Enriques, Universit\`a degli studi di Milano, Via Cesare Saldini 50, 20133 Milano, Italy.}\par\nopagebreak
  \textit{E-mail address}: \texttt{laura.pertusi@unimi.it} \par\nopagebreak
  \textit{URL}: \texttt{http://www.mat.unimi.it/users/pertusi/}.

  \medskip

  X.~Zhao, \textsc{Department of Mathematics, University of California, Santa Barbara, South Hall 6705, Santa Barbara, CA 93106, USA.}\par\nopagebreak
  \textit{E-mail address}: \texttt{xlzhao@ucsb.edu} \par\nopagebreak
  \textit{URL}: \texttt{https://sites.google.com/site/xiaoleizhaoswebsite/}.

}}
\begin{document}

\maketitle

\begin{abstract}
Using the algebraic criterion proved by Bandiera, Manetti and Meazzini, we show the formality conjecture for universally gluable objects with linearly reductive automorphism groups in the bounded derived category of a K3 surface. As an application, we prove the formality conjecture for polystable objects in the Kuznetsov components of Gushel--Mukai threefolds and quartic double solids. 
\end{abstract}

\section{Introduction}

The Formality Conjecture, as formulated by Kaledin and Lehn in \cite{KaledinLehn}, states that the differential graded algebra $\RHom(\mathcal{F},\mathcal{F})$ of the derived endomorphisms of a polystable sheaf $\mathcal{F}$ on a K3 surface (with respect to a generic polarization) is formal, i.e.\ it is quasi-isomorphic to its cohomology algebra. The conjecture has been proved in some cases by \cite{KaledinLehn} and \cite{Zhang}. Then it has been completely solved by Budur and Zhang in \cite{BudurZhang}, who showed this result more generally for every choice of the polarization and for polystable complexes in the bounded derived category with respect to a generic Bridgeland stability condition, and by Bandiera, Manetti and Meazzini in \cite{BMM2}, who showed the conjecture for polystable sheaves on a smooth minimal projective surface of Kodaira
dimension $0$. A further generalization of this conjecture has been recently proved by Arbarello and Saccà in \cite{AS_update}, for polystable objects in the bounded derived category of a K3 surface and in the Kuznetsov component of a cubic fourfold (without the genericity assumption on the stability condition). See also \cite{Davison} for the general setting of Calabi-Yau-2 categories.

The first result of this paper is an extension of Arbarello and Saccà's formality result in the case of K3 surfaces.

\begin{thm}[Theorem \ref{thm_formality}]
\label{thm_formality_intro}
 Let $X$ be a K3 surface and $F\in \Db(X)$ be an object such that $\Ext^i(F,F)=0$ for every $i<0$ and the automorphisms group $\emph{Aut}(F)$ is linearly reductive. Then the derived endomorphism Lie algebra $\RHom(F, F)$ is formal.
\end{thm}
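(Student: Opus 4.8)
The plan is to recast the statement as a purely algebraic assertion about the dg-Lie algebra $A:=\RHom(F,F)$ and then to apply the algebraic criterion of Bandiera--Manetti--Meazzini \cite{BMM2}. The first step unwinds the hypotheses. Since $\Ext^i(F,F)=0$ for $i<0$, the algebra $\mathrm{End}(F)=\Hom(F,F)$ is a finite-dimensional $\C$-algebra and $\Aut(F)$ is its group of units; such a group is linearly reductive exactly when $\mathrm{End}(F)$ is semisimple, i.e.\ exactly when $F\simeq\bigoplus_{i=1}^{m}F_i\otimes V_i$ with the $F_i$ pairwise non-isomorphic, $\Hom(F_i,F_i)=\C$, $\Hom(F_i,F_j)=0$ for $i\neq j$, and the $V_i$ finite-dimensional. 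Set $G:=\Aut(F)\simeq\prod_i\mathrm{GL}(V_i)$, a linearly reductive group with $\mathrm{Lie}(G)=\mathrm{End}(F)=\Ext^0(F,F)=H^0(A)$. Serre duality on the K3 surface $X$ gives $\Ext^i(F,F)\cong\Ext^{2-i}(F,F)^{\vee}$, so the hypothesis $\Ext^{<0}(F,F)=0$ also forces $\Ext^{>2}(F,F)=0$, and thus $H^{\bullet}(A)=\Ext^{\bullet}(F,F)$ is concentrated in degrees $0,1,2$.

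The second step records the extra structure on $A$. Since $\omega_X\cong\OO_X$, the assignment $\langle a,b\rangle:=\mathrm{tr}(a\cdot b)\in\Ext^{2}(F,F)\xrightarrow{\ \sim\ }H^{2}(X,\OO_X)\cong\C$ is a non-degenerate, graded-symmetric, cyclically invariant pairing on $A$ of degree $-2$, so $A$ is a cyclic dg-Lie algebra of Calabi--Yau dimension $2$; on cohomology it induces the perfect pairing $H^{0}\times H^{2}\to\C$ and an alternating perfect pairing $H^{1}\times H^{1}\to\C$, so that $\Ext^1(F,F)$ is a symplectic $G$-representation. The group $G$ acts on $A$ by conjugation through rational dg-Lie algebra automorphisms preserving $\langle-,-\rangle$; because $G$ is linearly reductive, one can choose $G$-equivariant contraction data and run the homotopy transfer theorem equivariantly, obtaining a $G$-equivariant cyclic $A_{\infty}$-minimal model $\big(\Ext^{\bullet}(F,F),\,m_1=0,\,m_2,\,m_3,\dots\big)$ with $m_2$ the Yoneda bracket. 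Formality of $A$ is precisely the vanishing $m_n=0$ for all $n\geq3$.

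The last step invokes the criterion of \cite{BMM2}: a dg-Lie algebra carrying an action of a linearly reductive group $G$ with $H^0\cong\mathrm{Lie}(G)$, cohomology in non-negative degrees, and a non-degenerate $G$-invariant cyclic pairing placing it in the Calabi--Yau--$2$ situation above, is formal once the associated deformation functor $\Def_F$ is quadratic --- equivalently, once the Kuranishi model of $\Def_F$ is cut out by the moment-map relations for the Hamiltonian $G$-action on $\Ext^1(F,F)$. I expect this last point to be the main obstacle. For a single simple summand $F_i$ it amounts to unobstructedness of $\Def_{F_i}$, classical on a K3 surface; but for a reducible $F$ --- and, unlike in Arbarello--Saccà \cite{AS_update}, without a stability condition placing $F$ in the heart of a bounded $t$-structure --- one must show directly that the higher obstruction maps contribute nothing beyond the Yoneda square, i.e.\ that $\Def_F$ admits a quadratic presentation. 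I would deduce this from the K3 geometry, using the triviality of $\omega_X$ together with a weight/Hodge-theoretic argument in the spirit of \cite{KaledinLehn}, or alternatively by an equivariant bootstrap from the simple summands through the $G$-stable decomposition $A=\bigoplus_{i,j}\RHom(F_i,F_j)\otimes\Hom(V_i,V_j)$, in which the cyclic constraint forces the off-diagonal higher products to vanish once the diagonal ones do. Granting this, \cite{BMM2} yields $m_n=0$ for $n\geq3$, so $\RHom(F,F)$ is formal.
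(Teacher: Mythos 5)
Your proposal correctly identifies the two structural ingredients (the degree $-2$ cyclic pairing coming from $\omega_X\cong\OO_X$, and a rational action of the linearly reductive group $G=\Aut(F)$ on a model of $\RHom(F,F)$), but it then misstates the Bandiera--Manetti--Meazzini criterion, and this creates a genuine gap. The criterion of \cite{BMM2} (Theorem \ref{BMMthm}) does \emph{not} take quadraticity of $\Def_F$, or any moment-map description of the Kuranishi locus, as a hypothesis: it is a purely algebraic statement asking for a splitting $L=K\oplus H\oplus d(K)$ of a quasi-cyclic DG-Lie algebra of degree $\leq 2$ with $H^{i}=0$ for $i<0$, $H^0$ closed under the bracket, and $H^i,K^i$ stable under the adjoint $H^0$-action. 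In the present situation such a splitting is obtained almost for free: once one has a model $L$ on which each graded piece is a rational $G$-representation, linear reductivity lets one choose the splitting $G$-invariantly, and $G$-invariance implies $H^0\cong\Hom(F,F)$-stability because $\Hom(F,F)$ is the Lie algebra of $G$; universal gluability gives $H^{i}=0$ for $i<0$. Quadraticity is then a \emph{consequence} of formality, not an input. By reversing this logic you have turned the easy verification into what you yourself call ``the main obstacle'': proving directly that $\Def_F$ is quadratic for an arbitrary universally gluable $F$ (with no stability condition and possibly many summands) is essentially as hard as the theorem itself, and neither of your two suggested strategies is substantiated --- the weight/Hodge-theoretic route is exactly the approach that historically handled only special cases, and the claim that ``the cyclic constraint forces the off-diagonal higher products to vanish once the diagonal ones do'' is asserted without any argument and is not true at this level of generality.

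A secondary, but still real, omission: you assert that $G$ acts ``by conjugation through rational dg-Lie algebra automorphisms preserving $\langle-,-\rangle$'' on a cyclic model of $\RHom(F,F)$. Since $\RHom(F,F)$ is only defined up to quasi-isomorphism, producing an honest strict $G$-action with rational graded pieces on a quasi-cyclic representative requires work; in the paper this is exactly the content of Lemma \ref{lemma_Greplacement} (an $\Aut(F)$-equivariant bounded locally free replacement of the complex $F$, built by an equivariant iterated-cone construction) and Lemma \ref{lemma_Greplacement2} (rationality of the induced action on the Dolbeault model \eqref{defofL}). Your equivariant homotopy-transfer step presupposes this structure rather than providing it. With the equivariant model in hand, no transfer to a minimal $A_\infty$-model and no quadraticity argument are needed: one applies Theorem \ref{BMMthm} directly, as in the proof of Theorem \ref{thm_formality}.
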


As an application of Theorem \ref{thm_formality_intro}, we obtain a formality statement for polystable objects in certain Enriques categories that arise as semiorthogonal components in the bounded derived category of some Fano threefolds. More precisely, let $Y$ be a quartic double solid, which is a smooth double cover of $\P^3$ branched on a quartic surface. Its Kuznetsov component $\Ku(Y)$ is defined by the semiorthogonal decomposition
\begin{equation}
\Db(Y)=\langle \Ku(Y), \OO_Y, \OO_Y(1) \rangle    
\end{equation}
and is an Enriques category (in the sense of Definition \ref{definition-CY2-Enriques}). As a second example, consider a special Gushel--Mukai threefold $X$, which is a smooth double cover of a codimension three linear section of the Grassmannian $\text{Gr}(2,5)$ branched on quartic surface. Then there is a semiorthogonal decomposition
\begin{equation}
\Db(X)=\langle \Ku(X), \OO_X, \UU_X \rangle    
\end{equation}
where $\UU_X$ denotes the pullback to $X$ of the rank $2$ tautological bundle on $\text{Gr}(2,5)$ and $\Ku(X)$ is the Kuznetsov component, which is an Enriques category. In both situations, the branch locus is a K3 surface and its derived category is the CY2 cover of the given Enriques category (see Lemma \ref{lemma-Enriques-CY2}).

Our second result is the following general statement about Enriques categories. 

\begin{thm}[Theorem \ref{thm_formality_Enriques}]
\label{thm_formality_Enriques_intro}
Let $\CC$ be an Enriques category with CY2 cover category $\DD$. Let $\DD^{\emph{dg}}$ be a DG enhacement of $\DD$ with a $\Z/2\Z$-action and let $(\DD^{\emph{dg}})^{\Z/2\Z}$ be the associated enhancement of $\DD^{\Z/2\Z}$. Let $E$ be a an object in $\CC$ such that $\Hom_{\DD^{\emph{dg}}}(\Forg(E),\Forg(E))$ is formal, where $\Forg \colon \CC^{\emph{dg}} \cong (\DD^{\emph{dg}})^{\Z/2\Z} \to \DD^{\emph{dg}}$ is the forgetful functor. Then $\Hom_{\CC^{\emph{dg}}}(E, E)$ is formal.   
\end{thm}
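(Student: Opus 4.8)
The plan is to exploit the relationship between the DG algebra $A \coloneqq \Hom_{\CC^{\mathrm{dg}}}(E,E)$ and the DG algebra $B \coloneqq \Hom_{\DD^{\mathrm{dg}}}(\Forg(E),\Forg(E))$. Since $\CC^{\mathrm{dg}} \cong (\DD^{\mathrm{dg}})^{\Z/2\Z}$ is the homotopy fixed-point (equivariant) category, the object $\Forg(E)$ carries a $\Z/2\Z$-equivariant structure, and $B$ inherits an action of $\Z/2\Z$ by DG algebra automorphisms. The first step is to identify $A$ with the homotopy fixed points $B^{h\Z/2\Z}$, or more concretely — since $2$ is invertible or we are working over a field of characteristic $\neq 2$, and since the group is finite — with the honest invariant subalgebra $B^{\Z/2\Z}$. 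In particular $A$ is a DG subalgebra of $B$, and the inclusion $A \hookrightarrow B$ is a quasi-isomorphism onto the $+1$-eigenspace (the invariant part) of $B$ with respect to the involution; equivalently $H^\bullet(A) = H^\bullet(B)^{\Z/2\Z}$, again because taking invariants under a finite group is exact in characteristic zero (or coprime characteristic). This reduces the theorem to the following purely algebraic statement: if $B$ is a DG algebra with a $\Z/2\Z$-action that is formal \emph{as a DG algebra}, then its invariant subalgebra $B^{\Z/2\Z}$ is formal.

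The second step is to promote formality of $B$ to $\Z/2\Z$-equivariant formality. Here I would invoke the hypothesis that $\Hom_{\DD^{\mathrm{dg}}}(\Forg(E),\Forg(E))$ is formal together with the group-action structure. The cleanest route: a formality quasi-isomorphism can be chosen to be compatible with the $\Z/2\Z$-action. Concretely, given any chain of quasi-isomorphisms connecting $B$ to $H^\bullet(B)$, one can average over the group $\Z/2\Z$ — using that $\frac{1}{|\Z/2\Z|}\sum_{g} g(-)$ is a projector onto invariants and that this averaging is compatible with $A_\infty$/DG structures in characteristic zero — to produce an equivariant formality datum. Equivalently, one applies the Bandiera--Manetti--Meazzini algebraic criterion (cited in the excerpt) in an equivariant form, or observes that the obstruction theory to equivariant formality lives in a group cohomology that vanishes because $|\Z/2\Z|$ is invertible. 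In either case, the output is a $\Z/2\Z$-equivariant $A_\infty$-quasi-isomorphism $B \simeq H^\bullet(B)$.

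The final step is to take invariants. An equivariant $A_\infty$-quasi-isomorphism $B \simeq H^\bullet(B)$ restricts to an $A_\infty$-quasi-isomorphism on invariant (fixed) parts, $B^{\Z/2\Z} \simeq (H^\bullet(B))^{\Z/2\Z} = H^\bullet(B^{\Z/2\Z})$, the last equality again by exactness of invariants. Combining with the identification $A \simeq B^{\Z/2\Z}$ from Step~1, we conclude $A \simeq H^\bullet(A)$, i.e. $\Hom_{\CC^{\mathrm{dg}}}(E,E)$ is formal. The main obstacle is Step~2: upgrading an \emph{a priori} non-equivariant formality to an equivariant one. If one only knows $B$ is formal as a DG algebra, the naive chain of quasi-isomorphisms need not respect the action, and one genuinely needs the characteristic-zero averaging trick (or the transfer of the equivariant $A_\infty$-structure along a contraction chosen equivariantly) to repair this. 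One should also take care in Step~1 that $\Forg$ really does identify $\CC^{\mathrm{dg}}$ with the strict fixed-point DG category so that the invariant-subalgebra description of $A$ is literally correct, rather than merely correct up to quasi-isomorphism; but since the excerpt grants us $\CC^{\mathrm{dg}} \cong (\DD^{\mathrm{dg}})^{\Z/2\Z}$ as enhancements, this is available.
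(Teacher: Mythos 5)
Your Step 1 is fine and in fact reproduces, in the language of eigenspaces, exactly the decomposition the paper uses: with $B:=\Hom_{\DD^{\mathrm{dg}}}(\Forg(E),\Forg(E))$ and $A:=\Hom_{\CC^{\mathrm{dg}}}(E,E)$, one has $B\cong A\oplus \Hom_{\CC}(E,E\otimes\chi)$, i.e.\ $A$ is the invariant part and the $\chi$-isotypic part is an $A$-module complement (this is \cite[Lemma 5.8]{PPZ} in the paper's proof). The genuine gap is your Step 2. Upgrading formality of $B$ to $\Z/2\Z$-\emph{equivariant} formality is not achieved by averaging: a formality datum is a zig-zag of DG (or $A_\infty$) algebra quasi-isomorphisms, and the average $\tfrac12(\phi+g\phi g^{-1})$ of two multiplicative maps is not multiplicative, nor is the average of $A_\infty$-morphisms an $A_\infty$-morphism (the higher components compose nonlinearly). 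What averaging does give you is an equivariant \emph{contraction} of $B$ onto $H^\bullet(B)$, hence an equivariant transferred $A_\infty$-structure; but then you still must show that this structure, known to be gauge-trivial by non-equivariant formality, is \emph{equivariantly} gauge-trivial. That is a real statement (essentially the vanishing of a nonabelian $H^1$ of $\Z/2\Z$ with coefficients in a pro-unipotent gauge group in characteristic zero, or a reductivity fixed-point argument on the gauge orbit), and your proposal only gestures at it; as written, the central step of your argument is unproved, and the one concrete mechanism you offer (averaging the quasi-isomorphism) fails.

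The paper avoids this issue entirely. From the splitting above, $A$ is a sub-DG(-Lie) algebra of $B$ which is a direct summand of $B$ as an $A$-module, and Manetti's formality transfer theorem (\cite[Theorem 3.4]{Manetti2}, restated as \cite[Theorem 2.3]{BMM2}) says precisely that formality of $B$ then implies formality of the retract $A$ --- no equivariant enhancement of the formality of $B$ is needed, only the $A$-linear projection $B\to A$. So either quote that transfer theorem (in which case your Step 1 already finishes the proof and Step 2 is superfluous), or supply a genuine proof of ``formal $+$ finite group action in characteristic $0$ $\Rightarrow$ equivariantly formal''; without one of these, the argument does not close.
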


Using Theorem \ref{thm_formality_Enriques_intro}, we show the formality conjecture for polystable objects in the Kuznetsov component of quartic double solids and Gushel--Mukai threefolds and fivefolds.

\begin{cor}[Corollary \ref{cor_qdsandspecGM3}] \label{cor_qdsandspecGM3_intro}
Let $X$ be a quartic double solid or a Gushel--Mukai threefold or fivefold. Let $\sigma$ be a Serre-invariant stability condition on the Kuznetsov component $\Ku(X)$ of $X$. Then $\RHom(E, E)$ is formal for every $\sigma$-polystable object $E \in \Ku(X)$.   
\end{cor}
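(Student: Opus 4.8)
The plan is to deduce the corollary from Theorem~\ref{thm_formality_Enriques_intro}, applied to $\CC = \Ku(X)$, and to verify the formality hypothesis appearing there by means of Theorem~\ref{thm_formality_intro}; so the first task is to put ourselves in a situation where the CY2 cover of $\Ku(X)$ is the bounded derived category of an honest K3 surface. In each of the three cases $\Ku(X)$ is an Enriques category in the sense of Definition~\ref{definition-CY2-Enriques}, and for a quartic double solid and for a special Gushel--Mukai threefold its CY2 cover $\DD$ is equivalent to $\Db(S)$ with $S$ the branch locus, a K3 surface, by Lemma~\ref{lemma-Enriques-CY2}. For an arbitrary Gushel--Mukai threefold or fivefold I would instead invoke the equivalences of Kuznetsov and Perry between Kuznetsov components of Gushel--Mukai varieties to replace $X$ by a Gushel--Mukai variety with geometric CY2 cover, such as a special Gushel--Mukai threefold; since an exact equivalence automatically preserves Serre-invariance of a stability condition (the Serre functor being intrinsic), sends polystable objects to polystable objects, and induces an isomorphism of the DG algebras of derived endomorphisms, this reduction does not change the statement to be proved. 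From now on $\DD \simeq \Db(S)$ for a K3 surface $S$.

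Now fix a Serre-invariant stability condition $\sigma$ on $\CC$ and a $\sigma$-polystable object $E \cong \bigoplus_i E_i$, the $E_i$ being $\sigma$-stable and all of one common phase $\phi$; replacing $E$ by a shift, which affects neither the hypotheses nor the conclusion, we may assume that $E$ lies in the heart $\AA_\sigma$ of $\sigma$. Because $\sigma$ is Serre-invariant, it is induced on $\CC \cong \DD^{\Z/2\Z}$ through the forgetful functor by a $\Z/2\Z$-invariant stability condition $\widetilde{\sigma}$ on $\DD$; in particular $\Forg$ carries $\AA_\sigma$ into the heart $\AA_{\widetilde{\sigma}}$ of $\widetilde{\sigma}$ and takes $\sigma$-semistable objects to $\widetilde{\sigma}$-semistable objects of the same phase. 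Each $E_i$ is simple in $\AA_\sigma$ (a stable object has no proper nonzero subobject), so by the standard description of the forgetful functor of a $\Z/2\Z$-equivariant abelian category, $\Forg(E_i)$ is either a simple object of $\AA_{\widetilde{\sigma}}$ or a direct sum of two non-isomorphic simple objects of $\AA_{\widetilde{\sigma}}$. As $\Forg(E_i)$ is moreover $\widetilde{\sigma}$-semistable of phase $\phi$, and a simple object of the heart that is semistable is automatically stable, in either case $\Forg(E_i)$ is a direct sum of pairwise non-isomorphic $\widetilde{\sigma}$-stable objects of phase $\phi$. Summing over $i$, the object $F := \Forg(E) \in \Db(S)$ has the form $F \cong \bigoplus_j G_j^{\oplus n_j}$ with the $G_j$ pairwise non-isomorphic and $\widetilde{\sigma}$-stable of phase $\phi$; that is, $F$ is $\widetilde{\sigma}$-polystable.

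With this in hand, Theorem~\ref{thm_formality_intro} applies to $F \in \Db(S)$. Its first hypothesis holds because $F$ is $\widetilde{\sigma}$-semistable of phase $\phi$ while $F[i]$ is semistable of phase $\phi + i < \phi$ for $i < 0$, so that $\Ext^i(F,F) = \Hom_{\Db(S)}(F,F[i]) = 0$ for every $i < 0$. Its second hypothesis holds because $\Db(S)$ is a CY2 category: we have $\Hom(G_j,G_j) = \C$ and $\Hom(G_j,G_k) = 0$ for $j \ne k$, whence $\mathrm{End}(F) \cong \prod_j \mathrm{Mat}_{n_j}(\C)$ and $\Aut(F) \cong \prod_j \mathrm{GL}_{n_j}(\C)$, a product of general linear groups over $\C$, which is linearly reductive. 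Thus $\RHom_{\Db(S)}(F,F) = \Hom_{\DD^{\mathrm{dg}}}(\Forg(E),\Forg(E))$ is formal, and Theorem~\ref{thm_formality_Enriques_intro} then gives that $\Hom_{\CC^{\mathrm{dg}}}(E,E) = \RHom(E,E)$ is formal, which is the assertion.

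The step I expect to be the main obstacle is the one carried out in the second paragraph: constructing, from the Serre-invariant stability condition $\sigma$ on $\Ku(X)$, a $\Z/2\Z$-invariant stability condition $\widetilde{\sigma}$ on the CY2 cover, and checking that the forgetful functor is compatible with hearts, semistability and phases, so that $\sigma$-polystable objects have $\widetilde{\sigma}$-polystable forgetful images. This is exactly where the Serre-invariance hypothesis is genuinely used. A secondary and more routine point is the reduction of the arbitrary Gushel--Mukai threefold and fivefold cases to one with geometric CY2 cover via the Kuznetsov--Perry equivalences, together with the verification that those equivalences preserve polystability and Serre-invariant stability conditions. Once $F = \Forg(E)$ is known to be polystable, the two remaining inputs --- the vanishing of the negative self-$\mathrm{Ext}$ groups of $F$ and the linear reductivity of $\Aut(F)$ --- are immediate.
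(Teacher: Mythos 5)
Your overall architecture (verify the hypotheses of Theorem~\ref{thm_formality_Enriques_intro} by checking that $\Forg(E)$ is polystable on the CY2 cover and then apply Theorem~\ref{thm_formality_intro}) is the paper's architecture, and your second paragraph showing that $\Forg(E)$ is $\widetilde{\sigma}$-polystable is essentially the role played in the paper by the citation of \cite[Theorem 4.8, Lemma 4.11]{PPZ}. But there is a genuine gap in your reduction to the case $\DD\simeq\Db(S)$ for a K3 surface $S$. For an \emph{ordinary} Gushel--Mukai threefold $X$ the CY2 cover of $\Ku(X)$ is not the derived category of a K3 surface: it is the Kuznetsov component of the special GM \emph{fourfold} obtained as the double cover branched along $X$, which for general $X$ is not equivalent to $\Db$ of any K3 surface. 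Your proposed fix --- use the Kuznetsov--Perry equivalences to replace $X$ by a GM variety whose Kuznetsov component has a geometric K3 cover, ``such as a special GM threefold'' --- is not available: \cite[Theorem 1.6]{KuzPerry_cones} identifies Kuznetsov components of period partners/duals with the same Lagrangian data, and for generic Lagrangian data there simply is no special GM threefold partner (the relevant EPW stratum is empty), so a general ordinary GM threefold cannot be traded for a special one. This is exactly why the paper, after reducing fivefolds to threefolds via \cite[Theorem 1.6]{KuzPerry_cones}, splits into cases: for special GM threefolds and quartic double solids the cover is $\Db$ of the branch K3 and Theorem~\ref{thm_formality} applies, while for ordinary GM threefolds formality of $\RHom(\Forg(E),\Forg(E))$ in the Kuznetsov component of the special GM fourfold is imported from \cite[Remark 0.1]{AS_update}, an input beyond Theorem~\ref{thm_formality}. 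Without that (or some substitute covering CY2 categories that are not derived categories of K3 surfaces), your argument only proves the corollary for quartic double solids, special GM threefolds, and those GM threefolds/fivefolds that happen to have such partners.

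A secondary gap is the DG-enhancement bookkeeping, which you pass over but which is needed for the statement to even make sense as an application of Theorem~\ref{thm_formality_Enriques_intro}: that theorem concerns $\Hom_{(\DD^{\mathrm{dg}})^{\Z/2\Z}}(E,E)$ for a \emph{chosen} $\Z/2\Z$-equivariant enhancement $\DD^{\mathrm{dg}}$, so one must (i) exhibit an equivariant enhancement of the cover (the paper does this via the geometric involution and \cite[Theorem 4.2]{Elagin}), and (ii) know that formality is independent of the enhancement, which the paper gets from strong uniqueness of enhancements \cite{LuntsOrlov, LPZ:enhancement} together with \cite{BudurZhang}. Your use of ``$\RHom(E,E)$'' implicitly identifies the natural enhancement of $\Ku(X)\subset\Db(X)$ with the equivariant-category enhancement; that identification is precisely what these citations justify and should not be taken for granted.
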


From the point of view of geometry, the formality conjecture is useful in the study of the local structure of moduli spaces at singular points. In fact, it is well-known that the formality of the derived endomorphism algebra of an object $F$ implies the quadracity property (see also \cite{BMM1} for a result in the opposite direction). Thus Theorem \ref{thm_formality_Enriques_intro} provides a description of a formal neighbourhood of a singular point in the moduli space of semistable objects in an Enriques category, like the Kuznetsov component of a quartic double solid or of a special Gushel--Mukai threefold, as in Proposition \ref{prop_localstrmoduli_Enriques}.

\begin{str}
In \cite{BMM2} Bandiera, Manetti and Meazzini prove an algebraic criterion to ensure the formality of a differential graded Lie algebra, which involves the notion of quasi-cyclic DG-Lie algebra. Then they apply this criterion to the DG-Lie algebra of derived endomorphisms $\RHom(\mathcal{F},\mathcal{F})$ of a coherent sheaf $\mathcal{F}$ with linearly reductive automorphisms group on a smooth minimal surface of Kodaira dimension $0$. 

The proof of Theorem \ref{thm_formality_intro} is an application of Bandiera-Manetti-Meazzini's criterion. As in their case, a key point is the construction of a locally free replacement for $F$ with certain special properties which is done in Lemma \ref{lemma_Greplacement} and Lemma \ref{lemma_Greplacement2}. Theorem \ref{thm_formality_Enriques_intro} is a direct consequence of the formality transfer proved in \cite[Theorem 3.4]{Manetti2}. The application in Corollary \ref{cor_qdsandspecGM3_intro} makes use of the strongly uniqueness of DG enhancements known by \cite{LuntsOrlov, LPZ:enhancement} and the result in \cite{BudurZhang} about invariance of formality under this condition.
\end{str}

\begin{plan}
The paper is organized as follows. In Section 2 we review the deformation theory of an object $F$ in a full admissible subcategory of the bounded derived category of a smooth  projective variety using the language of DG-Lie algebras (see the setting we work on at the beginning of Section 2). In particular, we consider the Dolbeault DG-Lie algebra presentation of $\RHom(F,F)$, defined in \eqref{defofL} using a locally free replacement for $F$. We recall the definition of the Kuranishi map and we show how this is related to the local structure of a good moduli space. In Section 3, after recalling the algebraic approach of \cite{BMM2}, we prove Theorem \ref{thm_formality_intro} and Theorem \ref{thm_formality_Enriques_intro}, together with the application to quartic double solids and Gushel--Mukai varieties of odd dimension.  
\end{plan}

\begin{ack}
We are very grateful to Enrico Arbarello and Giulia Sacc\`a for their interest and for useful conversations. It is a pleasure to thank Francesco Meazzini for many interesting discussions and Paolo Stellari for helpful comments. We also thank Emanuele Macrì for his interesting questions and for motivating us to complete this work.

X.Z.\ is partially supported by the Simons Collaborative Grant 636187, NSF grant DMS-2101789, and NSF FRG grant DMS-2052665. This paper was written when the second and third author were attending the Junior Trimester program ``Algebraic geometry: derived categories, Hodge theory, and Chow groups'' at the Hausdorff Institute for Mathematics in Bonn. We would like to thank this institution for the warm hospitality.
\end{ack}




\section{Deformation theory and good moduli spaces} 

We work in the following setting:
\begin{itemize}
\item Let $X$ be a smooth projective variety defined over $\C$ and let $\Db(X):=\Db(\Coh(X))$ be the bounded derived category of coherent sheaves on $X$.
\item Let $\DD$ be a full admissible subcategory of $\Db(X)$, e.g.\ $\DD$ is a semiorthogonal component of $\Db(X)$.
\item Let $F \in \DD$ be a universally gluable object, i.e. $\Ext^i(F,F)=0$ for every $i <0$. For instance this holds when $F$ is an object in the heart of a bounded t-structure on $\DD$.
\end{itemize}
The aim of this section is to review the deformation theory of $F$ due to Lieblich \cite{Lieblich} and show it is equivalently described by the deformation theory of the DG-Lie algebra $\RHom(F,F)$ of derived endomorphisms of $F$ in the framework introduced by Manetti \cite{Manetti}. Note that this is well-known in the case of vector bundles \cite{FMM} and coherent sheaves \cite{FIM}. The generalization to universally gluable object $F \in \DD$ is a direct consequence of these results and the existence of a locally free replacement for $F$ (see Lemma \ref{lemma_Greplacement}). We write this explicitly for the sake of completeness. Finally we use deformation theory to study the local structure of a good moduli space parameterizing objects $F \in \DD$ as above, under the additional assumption that $F$ has linearly reductive automorphism group.

\subsection{Preliminaries on deformation theory and DG-Lie algebras} \label{sec_deftheory}
 
Let $\Art$ be the category of local Artinian $\C$-algebras. As in \cite[Definition 3.2.1]{Lieblich}, given $A \in \text{ob}(\Art)$ we say that a deformation of $F$ to $X_A:=X \times \Spec A$ is the data of a complex $F_A$ on $X_A$ together with an isomorphism $\varphi \colon F_A \otimes^{\mathbb{L}}_A \C \cong F$. Consider the functor
$$\Def_F \colon \Art \to \text{Set}$$
defined by associating to every object $A$ in $\Art$ the set $\Def_F(A)$ of equivalence classes of deformations of $F$ to $X_A$, with respect to the following equivalence relation: two deformations $(F_A, \varphi)$, $(F'_A, \varphi')$ are equivalent if there exists an isomorphism $\psi \colon F_A \cong F_A'$ such that $\varphi' \circ \psi= \varphi$. Note that $F_A$ is in $\Db(X_A)$ by \cite[Lemma 3.2.4]{Lieblich}. By \cite[Theorem 3.1.1 and Proposition 3.5.1]{Lieblich} the functor $\Def_F$ is a deformation functor, whose tangent space $\Def_F(\C[t]/(t^2))$ is $\Ext^1(F,F)$, and the obstruction to the deformation is in $\Ext^2(F,F)$.

We remark 
that a deformation $F_A$ of $F$ to $X_A$ belongs to the base change category $\DD_A:=\DD \otimes_{\Db(\Spec \C)} \Db(\Spec A)$ if and only if $F \in \DD$ by \cite[Lemma 9.3]{BLM+} (see also \cite[Corollary 5.9]{Kuz_basechange}). 

On the other hand, it is possible to use the powerful theory of \emph{differential graded Lie (DG-Lie) algebras} to study the deformation theory of $F$. We refer to \cite{Manetti} for a survey on DG-Lie algebras and their associated deformation functors, recalling only the main definitions we need in the next (we work over $\C$, but the definitions are stated over a field of characteristic $0$).

\begin{dfn}
A DG-Lie algebra is a graded vector space $L:=\oplus_i L^i$ equipped with a differential $d: L^i \to L^{i+1}$ and a Lie bracket $[-,-]: L^i\otimes L^j \to L^{i+j}$ satisfying
\begin{itemize}
    \item $[f,g] = (-1)^{|f||g|+1}[g,f]; $
    \item $d[f,g] = [df, g] + (-1)^{|f|}[f,dg];$
    \item $(-1)^{|f||h|}[f,[g,h]]+(-1)^{|g||f|}[g,[h,f]]+(-1)^{|h||g|}[h,[f,g]]=0,$
\end{itemize}
for any homogeneous elements $f,g,h \in L$ of degree $|f|,|g|, \text{and}\ |h|,$ respectively. We shall denote by $H^i(L)$ the $i$-th cohomology of $L$ with respect to the differential $d.$
\end{dfn}
Associated to a DG-Lie algebra $L$ we have the following functors (see \cite[Section 3]{Manetti}). The \emph{Maurer-Cartan functor} $\text{MC}_L \colon \Art \to \text{Set}$ is defined by
$$\text{MC}_L(A)= \lbrace x \in L^1 \otimes \mathfrak{m}_A \colon dx + \frac{1}{2}[x,x]=0 \rbrace,$$
where $\mathfrak{m}_A$ is the maximal ideal of $A$. Equivalently, $\text{MC}_L(A)$ is the set of elements in $L^1 \otimes \mathfrak{m}_A$ satisfying the Maurer-Cartan equations. The \emph{Gauge functor} $\text{G}_L \colon \Art \to \text{Grp}$ is defined by 
$$\text{G}_L(A)=\exp(L^0 \otimes \mathfrak{m}_A),$$
where $\exp$ is the exponential functor applied to the nilpotent Lie algebra $L^0 \otimes \mathfrak{m}_A$ with values in the category of groups (see \cite[Example 2.4.2]{Manetti}). The \emph{deformation functor} associated to $L$ is 
$$\Def_L:= \text{MC}_L / \text{G}_L,$$
where the quotient is taken with respect to the action of the group functor $\text{G}_L$ on $\text{MC}_L$. As computed in \cite[Section 3]{Manetti}, the tangent space to $\Def_L$ is $H^1(L)$ and the obstructions are in $H^2(L)$.

\subsection{DG-Lie algebra of derived endomorphisms}
Coming back to our object $F \in \DD$, we need to fix a finite locally free replacement $\mathcal E^\cdot$ of $F$. The existence of such a replacement is probably known to the experts, but in order to study the formality property in the next sections we need one with special properties, as follows.
\begin{lem}\label{lemma_Greplacement}
Let $X$ be a smooth projective variety and $F \in \Db(X)$ be an object in the bounded derived category of $X,$ let $\emph{Aut}(F)$ be the group of automorphisms of $F.$ There exists an $\emph{Aut}(F)$-equivariant locally free replacement $\mathcal E^\cdot$ of $F.$ 
\end{lem}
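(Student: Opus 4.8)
The plan is to produce the replacement in two stages: first obtain *some* bounded complex of locally free sheaves quasi-isomorphic to $F$, then promote it to one on which $\Aut(F)$ acts compatibly. For the first stage, since $X$ is smooth and projective, $\Db(X)$ is equivalent to the homotopy category of bounded-above complexes of locally free sheaves with bounded coherent cohomology, and moreover such a complex can be truncated to a finite (bounded) complex of locally free sheaves because $X$ has finite global dimension; concretely one chooses a very ample $\OO_X(1)$ and builds a left resolution of $F$ by finite direct sums of line bundles $\OO_X(-n)$ for $n \gg 0$, stopping after $\dim X + (\text{length of }F)$ steps and replacing the last term by its (locally free) kernel. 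Call this complex $\mathcal{G}^\cdot$, with a chosen quasi-isomorphism $\mathcal{G}^\cdot \to F$ in $\Db(X)$.

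For the second, genuinely equivariant, stage I would argue as follows. The group $G := \Aut(F)$ is an affine algebraic group (it is an open subscheme of the affine space $\Hom(F,F)$, being the locus where an element is invertible, since $F$ is universally gluable so $\Hom(F,F)$ is a finite-dimensional algebra). The key trick is to resolve not $F$ itself but an object on which $G$ acts *geometrically*. Consider $X \times G$ with projection $p\colon X\times G \to X$, and form $p^*F$; the "universal automorphism" $\alpha\colon p^*F \xrightarrow{\sim} p^*F$ over $X\times G$, together with the group law, makes $p^*F$ into a $G$-equivariant object of $\Db(X\times G)$ in the weak sense. Now resolve $p^*F$ (or rather, one resolves over a single $X$ but keeps track of the $G$-action): more precisely, the DG-enhancement of $\Db(X)$ by complexes of injectives, or by the Čech/Dolbeault model, carries a strict $G$-action by transport of structure along automorphisms of $F$, and one wants a cofibrant (locally free, bounded) replacement in a $G$-equivariant model structure. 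Since $G$ is linearly reductive is *not* needed here — only that $G$ is a group acting on the object — one can take the locally free resolution $\mathcal{G}^\cdot$ from the first stage and average/rigidify: the set of quasi-isomorphisms $\mathcal{G}^\cdot \to F$ is a torsor-like object, and using functoriality of the resolution construction (the resolution by $\OO_X(-n)$'s is natural in $F$ up to the choices of generators at each step, which can be made $G$-equivariantly by picking $G$-stable generating subspaces of $\Hom(\OO_X(-n), \mathcal{G}^{i}\text{-cocycles})$ — here one uses that $G$ acts linearly on these finite-dimensional $\Hom$ spaces, not that the action is semisimple).

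The cleanest route, and the one I would actually write, is via DG-categorical descent: fix an injective (or h-injective) resolution $I^\cdot$ of $F$; then $\mathcal{E}nd^\cdot(I^\cdot)$ is a DG-algebra with $H^0 \supseteq G$, and the homotopy-coherent $G$-action on $I^\cdot$ can be strictified (Aut(F) is a discrete-in-the-relevant-sense, i.e. its action is by a genuine group of DG-automorphisms of $I^\cdot$ once we use functorial injective resolutions); then take a bounded locally free subcomplex $\mathcal{E}^\cdot \hookrightarrow I^\cdot$ — again functorial in the input by the smoothness/finite-dimensionality argument — and the $G$-action restricts to it because functorial constructions are equivariant. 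The main obstacle, and the point deserving care, is exactly this strictification: a priori $\Aut(F)$ acts on any chosen resolution only *up to homotopy*, and one must either (a) choose the resolution functorially so that the action is strict — which works because the standard resolutions ($\OO_X(-n)$-resolutions, or Godement/injective resolutions) are functors, not just well-defined up to quasi-isomorphism — or (b) pass to the bar construction / homotopy fixed points, which would complicate later arguments. I would take route (a): build a functor $R\colon \Db(X) \to \Db(X)$ landing in finite locally free complexes together with a natural quasi-isomorphism $R \Rightarrow \mathrm{id}$, apply it to $F$, and observe that naturality of $R$ in the argument forces every $g \in \Aut(F)$ to act on $R(F) = \mathcal{E}^\cdot$ compatibly with its action on $F$, which is precisely the asserted $\Aut(F)$-equivariance.
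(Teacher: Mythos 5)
Your plan hinges, in the route you say you would actually write, on the existence of a resolution functor $R\colon \Db(X)\to\{\text{finite locally free complexes}\}$ with a natural quasi-isomorphism $R\Rightarrow\mathrm{id}$, so that ``naturality forces every $g\in\Aut(F)$ to act on $R(F)$.'' This is where the argument breaks. The standard functorial resolutions (Godement, injective, or the evaluation-map resolutions by $\OO_X(-n)$'s) are functorial on the category of sheaves or of complexes \emph{with honest chain maps}, not on the derived category; an element of $\Aut(F)=\Aut_{\Db(X)}(F)$ is a zig-zag (or, on an injective or h-injective model, a chain map well defined only up to homotopy), so you cannot feed it into such a functor. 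A genuine section of the localization $\mathrm{Ch}(X)\to\Db(X)$ of the kind your $R$ would provide does not exist, and the strictification you correctly identify as ``the main obstacle'' is not resolved by observing that the resolution constructions are functors — that is exactly the circularity. The same problem infects your intermediate suggestion of choosing ``$G$-stable generating subspaces of $\Hom(\OO_X(-n),\mathcal G^i\text{-cocycles})$'': $G$ does not act on the cocycles of a resolution $\mathcal G^\cdot$ that was chosen non-equivariantly in the first place. (A further, smaller, issue: you propose a bounded locally free \emph{subcomplex} of an injective resolution; injective sheaves are not coherent and locally free replacements map \emph{to} the object, so this step is also not available as stated. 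Finally, for the later use of the lemma one needs the action to be algebraic, i.e.\ each term a rational representation, which a purely homotopy-theoretic strictification would not obviously provide.)

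The way the paper avoids this is worth internalizing: it never tries to make $\Aut(F)$ act on a chosen representative of $F$. Instead it builds the locally free complex inductively from the top out of sheaves that are \emph{canonical} in $F$ — the top cohomology sheaf $H^n(F)$, then the cohomology sheaves of the successive cones $\mathrm{cone}(\EE^{\geq i}\to F)$ — each of which carries an honest $\Aut(F)$-action because cohomology sheaves are functorial for derived-category morphisms. Each term is of the form $H^0(X,\HH(k))\otimes\OO_X(-k)$ for such a canonical $\HH$ and $k\gg 0$, the evaluation maps are canonical hence equivariant, and the differentials of $\EE^\cdot$ are composites of these equivariant maps; the non-equivariant choices (liftings $\tilde{ev}$ to a fixed representative of $F$) are only used to produce the comparison map $\EE^\cdot\to F$, not the $G$-structure on $\EE^\cdot$. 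This also makes each $\EE^i$ a finite-dimensional algebraic representation tensored with a line bundle, giving the rationality needed later. To repair your proposal you would have to replace ``apply a functorial resolution to $F$'' by a construction of this kind, built from canonical invariants of $F$ rather than from a chosen complex representative.
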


\begin{proof}
Denote by $G:= \Aut(F)$. Choose a bounded complex representative for $F$, and without loss of generality, we assume that $\mathcal F^i = 0$ for $i\notin [0,n]$. We divide the proof into several steps.\\
\\
\textbf{First Step:} Consider $$\mathcal F^{n-1} \twoheadrightarrow \mathcal I^{n-1} \hookrightarrow \mathcal F^n \twoheadrightarrow \mathcal I^n\to 0,$$ where $\mathcal I^{n-1}$ is the image of $\mathcal F^{n-1}\to \mathcal F^n$ and $\mathcal I^n \cong H^n (F)$ is the $n$-th cohomology sheaf of $F$. Let $\mathcal O_X(1)$ be a polarization on $X$. For a sufficiently large integer $k_n$ we define $\mathcal E^n:= H^0(X,\mathcal I^n(k_n))\otimes \mathcal O_X(-k_n)$ and thus have an exact sequence $$0\to \mathcal K^n \to \mathcal E^n\xrightarrow{ev_n} \mathcal I^n \to 0,$$ where $ev_n$ is the evaluation map and $\mathcal K^n:= ker(ev_n)$ is its kernel. The group of automorphisms $G$ acts on $\mathcal I^n$ and $\mathcal E^n,$ rendering the natural map $ev_n$ $G$-equivariant. Thus, $G$ acts on the kernel $\mathcal K^n$ as well. If $k_n$ is sufficiently large such that $\Ext^1(\mathcal O_X(-k_n), \mathcal I^{n-1})=0$, then the map $ev_n$ admits a lifting $\Tilde{ev}_n$ to $\mathcal F^n$ 
\begin{equation*}
    \begin{tikzcd}
    &  \mathcal K^n \arrow[rr, hook] \arrow[d, dashed] & & \mathcal E^n  \arrow[d, "ev_n"] \arrow[dl, dashed, "\Tilde{ev}_n"] & \\
    \mathcal F^{n-1} \arrow[r, two heads] & \mathcal{I}^{n-1} \arrow[r, hook] & \mathcal F^n \arrow[r]&  \mathcal I^n \arrow[r]& 0,
    \end{tikzcd}
\end{equation*} 
and induces a map $\mathcal K^n \to \mathcal I^{n-1}$.\\
\\
\textbf{Next Steps:} Now we can continue the construction of $\mathcal E$ by iterating the construction in the first step. Suppose that we have $\mathcal E^{\geq i}=[0\to \mathcal E^i \to \ldots \to \mathcal E^n \to 0]$ as a $G$-equivariant complex of locally free sheaves concentrated in degrees $[i, n]$, and it admits a chain map $\mathcal E^{\geq i} \to F$, such that the induced maps $H^j(\mathcal E^{\geq i}) \xrightarrow{\sim} H^j(F)$ are isomorphic for all $j>i$ and $H^{i}(\mathcal E^{\geq i}) \twoheadrightarrow H^i(F)$ is surjective. (Note that $\mathcal E^{\geq n}:= \mathcal E^n [-n]$ constructed above satisfies this condition, so in the first step we have achieved for $i=n$.)

Then let $\mathcal C^{<i}:= \text{cone}(\mathcal E^{\geq i}\to F)$ be the cone.  The cohomology sequence of this distinguished triangle gives the following $G$-equivariant exact sequences $$0 \to H^{i-1}(F) \to H^{i-1}(\mathcal C^{<i}) \to H^{i}(\mathcal E^{\geq i}) \twoheadrightarrow H^i(F)  \to H^i(\mathcal C^{<i}) = 0$$ and 
\begin{equation}\label{link}
    0 \to H^{i-1}(F) \to H^{i-1}(\mathcal C^{<i}) \twoheadrightarrow \mathcal K^i \to 0,
\end{equation}
where $\mathcal K^i:= ker(H^{i}(\mathcal E^{\geq i}) \twoheadrightarrow H^i(F))$ is the kernel. Note that we have a diagram 
\begin{equation*}
    \begin{tikzcd}
     & & \mathcal K^{i} \arrow[r, hook] \arrow[d, dashed] & H^i(\mathcal E^{\geq i}) \arrow[r, hook] \arrow[d] & \mathcal E^i  \arrow[d, ] \arrow[r] & \ldots\\
  \ldots  \arrow[r] & \mathcal F^{i-1} \arrow[r, two heads] & \mathcal I^{i-1} \arrow[r, hook] & \mathcal{Z}^{i} \arrow[r, hook] & \mathcal F^i \arrow[r]&   \ldots,
    \end{tikzcd}
\end{equation*}
where $\mathcal Z^i := ker(d^i:\mathcal F^i \to \mathcal F^{i+1})$ and $\mathcal I^{i-1}:= Im(d^{i-1}: \mathcal F^{i-1}\to \mathcal F^i)$ are the kernel and image. The map $\mathcal K^i \to \mathcal I^{i-1}$ is induced by the factorization $H^i(\mathcal E^{\geq i}) \to \mathcal Z^i \to H^i(F).$

Define $\mathcal E^{i-1}:= H^0(X, H^{i-1}(\mathcal C^{<i})(k_{i-1}))\otimes \mathcal O_X(-k_{i-1})$ for some sufficiently large number $k_{i-1}$ and let $ev_{i-1}: \mathcal E^{i-1} \twoheadrightarrow H^{i-1}(\mathcal C^{<i}) \twoheadrightarrow \mathcal K^i \to \mathcal I^{i-1}$ be the composition. Then, we have a lifting $\Tilde{ev}_{i-1}$
\begin{equation*}
    \begin{tikzcd}
     & \mathcal E^{i-1} \arrow[rd, , "ev_{i-1}"] \arrow[r, two heads] \arrow[d, dashed, "\Tilde{ev}_{i-1}"] & \mathcal K^i \arrow[r, hook] \arrow[d] & \mathcal E^i \arrow[r]  \arrow[d] & \ldots \\
  \ldots \arrow[r] & \mathcal F^{i-1} \arrow[r, two heads] & \mathcal{I}^{i-1} \arrow[r, hook] & \mathcal F^i \arrow[r]&  \ldots.
    \end{tikzcd}
\end{equation*} 
Thus, we obtain a $G$-equivariant complex $ \mathcal E^{\geq i-1}:= [0\to \mathcal E^{i-1}\to \ldots \to \mathcal E^n \to 0]$, which admits a chain map to $\mathcal F$. Note that $H^i(\mathcal E^{\geq i-1})\cong coker(\mathcal K^i \to H^i(\mathcal E^{\geq i})) \cong H^i(F)$ and $H^{i-1}(\mathcal E^{\geq i-1})\twoheadrightarrow H^{i-1}(F)$ is surjective because of the surjectivity of $\mathcal E^{i-1} \twoheadrightarrow H^{i-1}(\mathcal C^{<i})$ and \Cref{link}.\\
\\
\textbf{Final step:} Once we achieve the previous steps for $i=0$, we have a locally free complex $\mathcal E^{\geq 0}$ with equivariant $G$-actions on each $\mathcal E^i$, with $H^j(\mathcal E^{\geq 0}) \xrightarrow{\sim} H^j(F)$ are isomorphic for all $j>0$ and $H^{0}(\mathcal E^{\geq 0}) \twoheadrightarrow H^0(F)$ is surjective. Since $X$ is smooth and projective, we can continue resolving the kernel of $H^{0}(\mathcal E^{\geq 0}) \twoheadrightarrow H^0(F)$ by bounded complex of equivariant locally free sheaves. This gives the desired replacement.
\end{proof}

As in \cite[Section 5]{BMM2}, we consider the Dolbeault DG-Lie algebra presentation of $\RHom(F, F)$: given a finite locally free replacement $\mathcal E^\cdot$ of $F$, for instance the one in Lemma \ref{lemma_Greplacement}, define
\begin{equation}\label{defofL}
    L:=  \bigoplus_{q, r,s } A^{0,q}(\mathcal Hom(\mathcal E^r,\mathcal E^s)),
\end{equation}
where $A^{0,q}(\mathcal Hom(\mathcal E^r,\mathcal E^s))$ is the space of global $(0,q)$-forms taking values in $\mathcal Hom(\mathcal E^r,\mathcal E^s).$ Note that $L$ inherits a standard differential 
\begin{equation}\label{defofd}
d(\omega^{0,q}):= (-1)^q\bar{\partial}(\omega^{0,q})+d_{\mathcal E}\circ(\omega^{0,q}),
\end{equation}
where $\omega^{0,q}\in A^{0,q}(Tot^\cdot(\mathcal Hom(\mathcal E^\cdot, \mathcal E^\cdot)))$ is a $(0,q)$-form and $d_{\mathcal E}$ is the differential of the total complex $Tot^\cdot(\mathcal Hom(\mathcal E^\cdot, \mathcal E^\cdot))$. The wedge product $\wedge$ of $(0,q)$-forms naturally gives $L$ a DG-algebra structure with respect to the differential $d$, and moreover, induces a Lie bracket 
\begin{equation}\label{defofbracket}
[f,g]:=f\wedge g - (-1)^{|f||g|}g\wedge f. 
\end{equation}
The triple $(L, d, [-,-])$ is then a DG-Lie algebra representing $\RHom(F,F)$. 

We remark the following property which will be used in proof of Theorem \ref{thm_formality}.
\begin{lem}\label{lemma_Greplacement2}
Consider the $\emph{Aut}(F)$-equivariant locally free replacement $\EE^\cdot$ of $F \in \Db(X)$ constructed in Lemma \ref{lemma_Greplacement}. Then the DG-Lie algebra $L$ defined in \eqref{defofL} using such a replacement $\mathcal E^\cdot$ admits an $\emph{Aut}(F)$-action, such that each degree of $L$ is a rational representation of $\emph{Aut}(F).$ 
\end{lem}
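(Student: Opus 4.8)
The plan is to promote the $\Aut(F)$-action on the individual locally free sheaves $\EE^i$ — which was built into the construction in Lemma \ref{lemma_Greplacement} — to an action on the whole Dolbeault DG-Lie algebra $L$, and then check that the decomposition \eqref{defofL} displays each graded piece as a (possibly infinite-dimensional) rational representation of $\Aut(F)$. First I would recall that by construction each $\EE^i$ is of the form $V_i \otimes \OO_X(-k_i)$ with $V_i = H^0(X, \mathcal{G}_i(k_i))$ a finite-dimensional $\C$-vector space carrying a linear $G := \Aut(F)$-action (here $\mathcal{G}_i$ is the relevant quotient/cohomology sheaf appearing at the $i$-th stage), and $\OO_X(-k_i)$ is acted on trivially. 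Hence $G$ acts on each $\EE^i$ by bundle automorphisms lying over the identity of $X$, and therefore acts $\C$-linearly on $\mathcal{H}om(\EE^r, \EE^s)$ for all $r,s$, functorially in $(r,s)$, commuting with the total differential $d_{\EE}$ since the chain maps and all the maps in the construction were chosen $G$-equivariantly.

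Next I would transfer this to the Dolbeault complex: since $G$ acts on $\EE^i$ through automorphisms covering $\mathrm{id}_X$, it acts on the sheaves $\mathcal{A}^{0,q}(\mathcal{H}om(\EE^r, \EE^s))$ and on their global sections $A^{0,q}(\mathcal{H}om(\EE^r,\EE^s))$ — the $(0,q)$-form part is untouched because $G$ moves only the coefficient bundle, not $X$ — and this action commutes with $\bar\partial$ and with $d_\EE \circ (-)$, hence with the differential $d$ of \eqref{defofd}. Because $G$ acts by bundle maps it is compatible with the wedge product and composition, so it respects the bracket \eqref{defofbracket}. Thus $(L, d, [-,-])$ becomes a DG-Lie algebra with $G$-action. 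It remains to see that each $L^n = \bigoplus_{q + s - r = n} A^{0,q}(\mathcal{H}om(\EE^r,\EE^s))$ is a rational representation: writing $\mathcal{H}om(\EE^r, \EE^s) = V_r^\vee \otimes V_s \otimes \OO_X(k_r - k_s)$, the $G$-action on $A^{0,q}(\mathcal{H}om(\EE^r,\EE^s)) = V_r^\vee \otimes V_s \otimes A^{0,q}(\OO_X(k_r - k_s))$ is exactly (the finite-dimensional rational representation $V_r^\vee \otimes V_s$) tensored with a trivial $G$-module. Since $V_r^\vee \otimes V_s$ is rational — $V_r, V_s$ being finite-dimensional algebraic representations of the algebraic group $\Aut(F)$ — each $L^n$ is a (filtered) union of finite-dimensional rational subrepresentations, hence rational in the sense required.

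The only genuine point that needs care — and the main obstacle — is making sure that the $G$-action promised on the $V_i$ really is \emph{algebraic} (rational), not merely abstract: one must observe that $\Aut(F)$ is an algebraic group acting algebraically on $F$ (more precisely, on a chosen complex representative and hence on its cohomology sheaves $\mathcal{G}_i$), so that the induced action on each finite-dimensional cohomology group $V_i = H^0(X, \mathcal{G}_i(k_i))$ is a morphism of algebraic groups $\Aut(F) \to GL(V_i)$. This follows because taking global sections of a $G$-equivariant sheaf is a functor landing in rational $G$-modules; all the sheaves occurring in the construction of Lemma \ref{lemma_Greplacement} ($\mathcal{I}^i$, $\mathcal{K}^i$, $H^i(\mathcal{C}^{<i})$, etc.) are obtained from $F$ by kernels, cokernels, images and cones of $G$-equivariant maps, hence are $G$-equivariant coherent sheaves, and their twisted global sections are finite-dimensional rational $G$-modules. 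Granting this, the rest is bookkeeping, and the lemma follows.
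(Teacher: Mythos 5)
Your argument is correct in substance, but it takes a genuinely different route from the paper: the paper's proof is a two-line citation of \cite[Lemma 3.5]{BMM1} (for every open $U\subset X$ the sections $\EE^i(U)$ form a rational representation of $\Aut(F)$ with finite support) followed by \cite[Theorem 3.8]{BMM1} (which then gives that each degree of the Dolbeault DG-Lie algebra $L$ is rational), whereas you reprove these facts by hand from the explicit shape of the resolution. Your key observation --- that $G=\Aut(F)$ acts only on the finite-dimensional factor $V_i=H^0(X,\mathcal G_i(k_i))$ of $\EE^i\cong V_i\otimes\OO_X(-k_i)$, so that $A^{0,q}(\mathcal Hom(\EE^r,\EE^s))\cong V_r^\vee\otimes V_s\otimes A^{0,q}(\OO_X(k_r-k_s))$ is a rational representation because the action is concentrated on a finite-dimensional algebraic factor --- is exactly the mechanism encapsulated in the cited results, and your insistence that the actions on the $V_i$ be algebraic (the map $\Aut(F)\to GL(V_i)$ being the restriction of a linear map $\Hom(F,F)\to\mathrm{End}(V_i)$) makes explicit a point the paper leaves implicit; what the citation buys is brevity and uniformity, what your version buys is transparency. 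The one spot where your write-up is weaker than the citation is the ``Final step'' of Lemma \ref{lemma_Greplacement}: the negative-degree terms are only asserted there to be $\Aut(F)$-equivariant locally free sheaves, and the last syzygy in particular need not have the split form $V\otimes\OO_X(-k)$ with trivial action on the twist, so your tensor decomposition of $\mathcal Hom(\EE^r,\EE^s)$ does not literally apply to Hom-bundles involving that term (especially in the contravariant slot). This is fillable, and is covered by \cite{BMM1}: for any algebraic equivariant structure on a locally free sheaf covering $\mathrm{id}_X$, the action on smooth forms is induced by a morphism of varieties from $\Aut(F)$ to the finite-dimensional space $H^0(X,\mathcal End(-))$, so every orbit spans a finite-dimensional algebraic subrepresentation; you should either record this general observation or handle the final locally free kernel separately (e.g.\ by continuing the evaluation-map construction until only that kernel remains and treating it via the above remark), rather than asserting the split form for all $\EE^i$.
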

\begin{proof}
Set $G :=\Aut(F)$. By \cite[Lemma 3.5]{BMM1}, if $U\subset X$ is any open subset, then $\mathcal E^i(U)$ is a rational representation of $G$ with finite support for every subgroup of $G.$ Then, by \cite[Theorem 3.8]{BMM1}, the DG Lie algebra $L:=  \bigoplus_{q, r,s } A^{0,q}(\mathcal Hom(\mathcal E^r,\mathcal E^s))$ has the desired property.
\end{proof}

In the next lemma we show that the deformation theory of $F$ is controlled by the deformation theory of the DG-Lie algebra $\RHom(F,F)$. This is an immediate consequence of the results in \cite{FIM} for coherent sheaves.

\begin{lem}
\label{lemma_deformationfunctors}
There is an isomorphism between the deformation functors
$$\Def_F \cong \Def_L.$$
\end{lem}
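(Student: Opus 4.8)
The plan is to reduce the claim to the known comparison results for sheaves and vector bundles via the locally free replacement $\EE^\cdot$ of $F$ constructed in Lemma \ref{lemma_Greplacement}. Recall that Fiorenza--Iacono--Martinengo \cite{FIM} (building on \cite{FMM}) prove that for a coherent sheaf, or more generally a bounded complex of locally free sheaves $\EE^\cdot$, the Dolbeault DG-Lie algebra $L$ of \eqref{defofL} controls the deformation functor of $\EE^\cdot$ \emph{as a complex}; that is, $\Def_{\EE^\cdot} \cong \Def_L$, where $\Def_{\EE^\cdot}$ parametrizes flat deformations of the complex of $\OO_X$-modules $\EE^\cdot$ up to quasi-isomorphism over $X_A$. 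So it suffices to produce a natural isomorphism $\Def_F \cong \Def_{\EE^\cdot}$ of deformation functors on $\Art$.

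First I would make precise the deformation functor $\Def_{\EE^\cdot}$: an object over $A$ is a bounded complex $\EE_A^\cdot$ of $A$-flat coherent sheaves on $X_A$ together with a quasi-isomorphism $\EE_A^\cdot \otimes^{\L}_A \C \simeq \EE^\cdot$, modulo the evident equivalence relation. Since $\EE^\cdot$ is a bounded complex of locally free sheaves and $A$-flatness is automatic in each degree once the terms are locally free, one checks that every deformation of the complex $\EE^\cdot$ can be represented (up to equivalence) by a complex of locally free $\OO_{X_A}$-modules; this is the step where boundedness and local freeness of the chosen replacement matter. The assignment $\EE_A^\cdot \mapsto \EE_A^\cdot$ viewed in $\Db(X_A)$, together with the fixed quasi-isomorphism $\EE^\cdot \simeq F$, gives a natural transformation $\Def_{\EE^\cdot} \to \Def_F$. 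I would then check this is an isomorphism of functors: essential surjectivity uses that any deformation $F_A \in \Db(X_A)$ of $F$ (which lies in $\Db(X_A)$ by \cite[Lemma 3.2.4]{Lieblich}) can be replaced, again because $\EE^\cdot \to F$ is a locally free resolution and by a standard lifting-of-resolutions argument over the Artinian base $A$ (proceeding by induction on the length of $A$ and using vanishing of the relevant $\Ext^1$'s as in the proof of Lemma \ref{lemma_Greplacement}), by a complex of locally free $\OO_{X_A}$-modules specializing to $\EE^\cdot$; injectivity on equivalence classes follows because a quasi-isomorphism between two such locally free deformations lifts to an actual chain map, hence to an equivalence in $\Def_{\EE^\cdot}$, by the same derived-category bookkeeping. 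Compatibility of the tangent and obstruction spaces ($\Ext^1(F,F)$ and $\Ext^2(F,F)$ on both sides) is then automatic and serves as a consistency check.

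Combining the two isomorphisms yields $\Def_F \cong \Def_{\EE^\cdot} \cong \Def_L$, which is the assertion of Lemma \ref{lemma_deformationfunctors}. I expect the main obstacle to be the essential-surjectivity part of $\Def_{\EE^\cdot} \to \Def_F$: one must show that every object-theoretic deformation $F_A$ of $F$ in $\Db(X_A)$ admits a strictly-perfect (bounded locally free) representative whose restriction to the closed fiber is honestly the complex $\EE^\cdot$, not merely quasi-isomorphic to it. This is a lifting problem for resolutions over a nilpotent thickening and is handled exactly as in the iterative construction of Lemma \ref{lemma_Greplacement}, using that the obstruction to lifting the relevant evaluation maps lives in $\Ext^1$ groups that vanish after twisting, and that $X$ (hence $X_A$) is smooth projective so bounded locally free resolutions exist; but it is the one place where genuine care, rather than formal nonsense, is required. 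The remaining verifications — functoriality in $A$, and that the equivalence relations match up — are routine and I would not spell them out in detail.
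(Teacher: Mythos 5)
Your overall strategy --- reduce to \cite{FIM} via the locally free replacement $\EE^\cdot$ of Lemma \ref{lemma_Greplacement} --- is the same as the paper's, but the step you black-box is stronger than what \cite{FIM} actually proves, and this is where the genuine gap lies. You assert that for an arbitrary bounded complex of locally free sheaves the Dolbeault DG-Lie algebra $L$ of \eqref{defofL} controls deformations of $\EE^\cdot$ \emph{up to quasi-isomorphism over $X_A$}. What is available in \cite{FIM} is a chain of comparisons: derived-type deformations of $\EE^\cdot$ are computed by the \v{C}ech-type functor $H^1_{\text{Ho}}(X;\exp \mathcal Hom(\EE^\cdot,\EE^\cdot))$, and this functor is isomorphic to the deformation functor of the (totalized, Dolbeault) DG-Lie algebra only under the vanishing $\Ext^i(F,F)=0$ for $i<0$ (this is \cite[Theorem 4.11]{FIM}, combined with \cite[Theorem 3.7]{FIM}). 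That vanishing is precisely the universally gluable hypothesis of the lemma, and it is the only non-formal input of the whole statement; your proposal never uses it. As written, your argument would equally ``prove'' the comparison for complexes with nonzero negative self-Ext groups, where gauge equivalence classes of Maurer--Cartan elements no longer compute isomorphism classes of derived deformations, so the claim you attribute to \cite{FIM} is not one they (or anyone) establish in that generality.

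The second half of your argument is, in addition, both unnecessary and shaky. With your own definition of $\Def_{\EE^\cdot}$ (flat deformations together with a quasi-isomorphism $\EE^\cdot_A \otimes^{\L}_A \C \simeq \EE^\cdot$, i.e.\ a Lieblich-type functor), the isomorphism $\Def_F \cong \Def_{\EE^\cdot}$ is immediate from $\EE^\cdot \cong F$ in $\Db(X)$, which is the one-line observation the paper makes; no lifting of resolutions over the Artinian base is needed. If instead you intend to compare with a strict functor whose objects are complexes of locally free $\OO_{X_A}$-modules restricting on the nose to $\EE^\cdot$, then your sketch is not enough: a quasi-isomorphism in $\Db(X_A)$ between two bounded complexes of locally free sheaves is in general only a roof and need not be realized by an actual chain map, so your injectivity step does not follow by ``derived-category bookkeeping''; making this strict-versus-derived comparison precise is exactly the content of \cite[Sections 2--4]{FIM}, and the efficient route (the one the paper takes) is to quote those results, with the $\Ext^{<0}$ vanishing in hand, rather than to redo them by hand.
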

\begin{proof}
The result holds for vector bundles and coherent sheaves by \cite{FMM}, \cite{FIM}. Since $\EE^\cdot \cong F$ in $\Db(X)$ we have $\Def_F \cong \Def_{\EE^\cdot}$. By \cite[Section 2]{FIM} we have the isomorphism $\Def_{\EE^\cdot} \cong H^1_{\text{Ho}}(X;\exp \mathcal Hom(\EE^\cdot,\EE^\cdot))$, where the latter is a \v{C}ech type functor associated to the sheaf of DG-Lie algebras $\mathcal Hom(\EE^\cdot,\EE^\cdot)$ (see \cite[Section 3.1]{FIM} for the precise definition). We argue as explained in \cite[Section 5]{FIM}. Since $\Ext^{i}(F,F)=0$ for every $i<0$, by \cite[Theorem 4.11]{FIM} we have $H^1_{\text{Ho}}(X;\exp \mathcal Hom(\EE^\cdot,\EE^\cdot)) \cong \Def_{[\mathcal Hom(\EE^\cdot, \EE^{\cdot})]}$ and by \cite[Theorem 3.7]{FIM} we have $\Def_{[\mathcal Hom(\EE^\cdot, \EE^{\cdot})]} \cong \Def_{A^{0,*}(\mathcal Hom(\EE^\cdot, \EE^\cdot))}$. This implies the statement. 
\end{proof}

\begin{rmk}
Consider the locally free replacement $\EE^\cdot$ of Lemma \ref{lemma_Greplacement} and \ref{lemma_Greplacement2}. Since the resolution $\EE^\cdot$ is $\Aut(F)$-equivariant, one can check that the isomorphism in Lemma \ref{lemma_deformationfunctors} is $\Aut(F)$-equivariant too.   
\end{rmk}

\subsection{Kuranishi map} \label{sec_kuranishi}
We keep the setting introduced at the beginning of this section and we consider $F \in \DD$ whose group of automorphisms $G:=\Aut(F)$ is linearly reductive. 

Using the obstruction to the deformation of $F$ it is possible to define the well-known Kuranishi map. An explicit construction of a $G$-equivariant Kuranishi map is described in \cite[Appendix]{LehnSorger} if $F$ is a polystable sheaf on a K3 surface. By Lemma \ref{lemma_deformationfunctors}, the Kuranishi map of $F$ is identified with the Kuranishi map of the DG-Lie algebra $L$ defined in \eqref{defofL}, so we recall its definition using this formalism as in \cite[Section 4]{Manetti}.

We use the following notation. If $V$ is a $\C$-vector space we denote by $\widehat{V} \colon \text{Art} \to \text{Set}$ the functor defined on objects by $\widehat{V}(A)=V \otimes \mathfrak{m}_A$ and analogously on morphisms. 

Given a DG-Lie algebra $(L, d, [-,-])$ we set $Z^n(L)=\lbrace x \in L^n \colon dx=0 \rbrace$ and $B^n(L)=\lbrace dx \colon x \in L^{n-1} \rbrace$ for $n \in \Z$. We can choose a splitting
\begin{equation}
\label{eq_splitting} 
Z^n(L)= B^n(L) \oplus H^n(L) \quad \text{and} \quad L^n= Z^n(L) \oplus K^n.
\end{equation}
Note that $d \colon K^n \to B^{n+1}(L)$ is an isomorphism. Then we denote by $\delta \colon L^{n+1} \to L^n$ the linear composition of the projection $L^{n+1} \to B^{n+1}(L)$ with kernel $H^n(L) \oplus K^n$, the inverse $d^{-1} \colon B^{n+1}(L) \xrightarrow{\cong} K^n$, and the inclusion $K^n \hookrightarrow L^n$, i.e.\ $\delta$ sits in the commutative diagram
$$
\xymatrix{
L^{n+1} \ar[d] \ar[r]^\delta &L^n\\
B^{n+1}(L) \ar[r]^-{d^{-1}}          &K^n \ar@{^{(}->}[u]}.
$$
Let $H \colon L^n \to H^n(L)$ be the projection with kernel $B^n(L) \oplus K^n$. By \cite[Lemma 4.2]{Manetti} we have the isomorphism of functors $\phi \colon \widehat{L^1} \to \widehat{L^1}$ given by
$$x \in L^1 \otimes \mathfrak{m}_A \mapsto \phi(x)=x + \frac{1}{2}\delta[x,x] \in L^1 \otimes \mathfrak{m}_A.$$

The \emph{Kuranishi map} is the natural transformation
$$\kappa \colon \widehat{H^1(L)} \to \widehat{H^2(L)}$$
defined by
$$x \in H^1(L) \otimes \mathfrak{m}_A \mapsto \kappa(x)=H([\phi^{-1}(x), \phi^{-1}(x)]) \in H^2(L) \otimes \mathfrak{m}_A.$$ 
The \emph{Kuranishi functor} $\kappa^{-1}(0) \colon \Art \to \text{Set}$ is defined by
$$\kappa^{-1}(0)(A)= \lbrace x \in H^1(L) \otimes \mathfrak{m}_A \colon \kappa(x)=H([\phi^{-1}(x), \phi^{-1}(x)])=0 \rbrace.$$
By \cite[Theorem 4.7]{Manetti} there exists an \'etale morphism $\nu \colon \kappa^{-1}(0) \to \Def_L$.

Now consider the DG-Lie algebra $L$ in \eqref{defofL}. Then $H^1(L) \cong \Ext^1(F,F)$ and $H^2(L) \cong \Ext^2(F,F)$. As explained in \cite[Theorem 5.1]{BMM2}, we can choose a splitting of the form \eqref{eq_splitting} where $H^n(L)$ and $K^n$ are $G$-invariant. Indeed, by Lemma \ref{lemma_Greplacement2} and the assumption that $G$ is linearly reductive, we can extend $H^0(L) \cong \Hom(F,F) \subset Z^0(L) \subset L^0$ to a $G$-equivariant splitting of $L$ with the desired properties. As a consequence $H$ and $\delta$ are $G$-equivariant maps and then the Kuranishi map is $G$-equivariant. In particular, there is an action of $G$ on the Kuranishi functor and the \'etale morphism $\nu$ is $G$-equivariant. 

Since $H^1(L)$ is finite-dimensional, the Kuranishi functor is prorepresentable. More precisely, we follow the computation in \cite[Section 15.6]{Manetti_book}. Consider the polynomial ring $R:=\C[H^1(L)]$ over $H^1(L)$ and let $\widehat{R}$ be the completion of the ring $R$ with respect to the maximal ideal $\mathfrak{m}$ of polynomial functions vanishing at $0$. Consider the functor $h_{\widehat{R}}:= \Hom(\widehat{R}, -) \colon \Art \to \text{Set}$. Then we have the isomorphism of functors $h_{\widehat{R}} \cong \widehat{H^1(L)}$ which, in coordinates fixing a basis $e_1, \dots, e_m$ for $H^1(L)$ and writing $\widehat{R} \cong \C[[r_1, \dots, r_m]]$, is defined by
$$(f \colon \widehat{R} \to A) \quad  \mapsto \quad \sum_{i=1}^m e_i \otimes f(r_i) \in H^1(L) \otimes \mathfrak{m}_A.$$
Then we see that $f \colon \widehat{R} \to A$ corresponds to an element in $\kappa^{-1}(0)(A)$ if and only if 
$$\kappa(f) \colon H^1(L) \otimes \mathfrak{m} \to H^2(L) \otimes \mathfrak{m}_A$$ is the zero map. Write 
$$\kappa(\sum_{i=1}^m e_i \otimes r_i)= \sum_j v_j \otimes g_j(r_1, \dots,r_m) \in H^2(L) \otimes \mathfrak{m}_A$$
and denote by $\mathfrak{a}$ the ideal in $\widehat{R}$ generated by the power series $g_j$'s. Then the above computation shows that $$\kappa^{-1}(0) \cong h_{\widehat{R}/ \mathfrak{a}},$$
namely the Kuranishi functor is prorepresented by $\widehat{R}/\mathfrak{a}$ and
$$(\kappa^{-1}(0),0)=\text{Spf}(\widehat{R}/\mathfrak{a})=\text{colim}_{n}\Spec((\widehat{R}/ \mathfrak{a})/\mathfrak{m}^n)$$
as formal schemes (here $\mathfrak{m}$ is the maximal ideal of $\widehat{R}/\mathfrak{a}$). Using $\nu$ we have that there exists a $G$-equivariant formal deformation $(\widehat{F},\widehat{\varphi})$ of $F$ having the versality property, parametrized by $(\kappa^{-1}(0),0)$.

We denote by $\kappa_2$ the quadratic part of the Kuranishi map, which is given by the Yoneda product $\kappa_2(e)=e \mathbin{\smile} e$ for $e \in \Ext^1(F,F)$. We say that $F$ satisfies the \emph{quadracity property} if 
$$\kappa^{-1}(0) \cong \kappa^{-1}_2(0),$$
namely that the base space of the Kuranishi family is cutted out by quadratic equations. 

\subsection{Local structure of a good moduli space}
We keep the notation and assumptions introduced at the beginning of this section. Our goal is to describe the local structure of a good moduli space parametrizing universally gluable objects in $\DD$ using the deformation theory and the Kuranishi map.

Recall that given a flat, proper, finitely presented morphism of schemes $Y \to T$, a $T$-perfect complex $F \in \mathrm{D}(Y)$ is \emph{universally gluable} if for every point $t \in T$ we have $\Ext^i(F_t,F_t)=0$ for $i <0$ (see \cite[Definition 2.1.8,  Proposition 2.1.9]{Lieblich}). We denote by $\Dp(Y) \subset \mathrm{D}(Y)$ the full subcategory of universally gluable $T$-perfect
objects.

In our setting, consider the functor
$$\MM_{\text{pug}}(\DD) \colon (\text{Sch})^{\text{op}} \to \text{Gpd}$$
from the opposite category of schemes over $\C$ to the category of groupoids, which associates to $T$ in $\text{ob}(\text{Sch})$ the groupoid $\MM_{\text{pug}}(\DD)(T)$ of all $F \in \Dp(X_T)$ such  that $F_t \in \DD_t$ for every $t \in T$. By \cite[Proposition 9.2]{BLM+} $\MM_{\text{pug}}(\DD)(T)$ is an algebraic stack locally of finite presentation and the canonical morphism $\MM_{\text{pug}}(\DD) \to \MM_{\text{pug}}(\Db(X))$ is an open immersion.

Let $\MM$ be an open substack of $\MM_{\text{pug}}(\DD)$ and assume it admits a good moduli space 
$$\pi \colon \MM \to M$$ 
in the sense of \cite{Alper}. A closed point $x \in \MM$ represents the orbit of a universally gluable objects $F \in \DD$ with respect to the action of the stabilizer group $G_x \cong \Aut(F)$.

\begin{ex} \label{ex_modulistability}
Assume that there exists a proper stability condition $\sigma$ on $\DD$ with respect to the numerical Grothendieck group $K_{\text{num}}(\DD)$, i.e.\ $\sigma$ is a full numerical stability condition on $\DD$. For $v \in K_{\text{num}}(\DD)$, we denote by $\MM_\sigma(v)$ the moduli stack parametrizing $\sigma$-semistable objects in $\DD$. By \cite[Theorem 21.24]{BLM+}, \cite{AHLH} $\MM_{\sigma}(v)$ admits a good moduli space 
$$\pi \colon \MM_{\sigma}(v) \to M_\sigma(v),$$ 
which is a proper algebraic space. 

A closed point $x \in \MM_{\sigma}(v)$ determines the orbit of a $\sigma$-semistable object $F$ with numerical class $v$ with respect to the action of the stabilizer $G_x \cong \Aut(F)$. The point $\pi(x) \in M_\sigma(v)$ determines the S-equivalence class of $F$. 
\end{ex}

Let $T_{\MM,x}$ be the tangent space to $\MM$ at $x$. Note that by definition  
$$T_{\MM,x} \cong T_{\MM_{\text{pug}}(\DD)} \cong \Def_F(\C[t]/(t^2)) \cong \Ext^1(F,F)$$
where $\Def_F$ is the deformation functor of $F$ in $\DD$ introduced in Section \ref{sec_deftheory}. Recall from Section \ref{sec_kuranishi} the notation $R:=\C[\Ext^1(F,F)]$, the completion $\widehat{R}$ of the ring $R$ at the maximal ideal $\mathfrak{m}$ of polynomial functions vanishing at $0$ and the ideal $\mathfrak{a} \subset \widehat{R}$ such that $\widehat{R}/\mathfrak{a}$ prorepresents the Kuranishi map.

The following property is well-known in the case of moduli spaces of semistable sheaves on a K3 surface, and generalizes \cite[Lemma 3.2]{LPZ:elliptic} to this more general setting. 

\begin{lem}
\label{lemma_localstructuremoduli}
With the above notation, assume that the stabilizer group $G_x$ of $x$ is linearly reductive. Then 
$$\widehat{\OO}_{M,\pi(x)} \cong (\widehat{R}/\mathfrak{a})^{G_x} \cong \widehat{R}^{G_x} /(\mathfrak{a} \cap \widehat{R}^{G_x}).$$
\end{lem}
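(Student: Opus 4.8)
The plan is to combine the prorepresentability of the Kuranishi functor by $\widehat{R}/\mathfrak{a}$ (established in Section \ref{sec_kuranishi}) with the étale-local structure theorem for good moduli spaces of algebraic stacks with linearly reductive stabilizers, due to Alper--Hall--Rydh. First I would recall from Section \ref{sec_kuranishi} that, because $G_x \cong \Aut(F)$ is linearly reductive, we can choose a $G_x$-equivariant splitting of the form \eqref{eq_splitting}, so the Kuranishi map $\kappa$ is $G_x$-equivariant and the formal versal deformation $(\widehat{F},\widehat{\varphi})$ parametrized by $(\kappa^{-1}(0),0) = \mathrm{Spf}(\widehat{R}/\mathfrak{a})$ carries a $G_x$-action. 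The étale morphism $\nu \colon \kappa^{-1}(0) \to \Def_L \cong \Def_F$ is then $G_x$-equivariant, and combined with the deformation-functor identification $T_{\MM,x}\cong \Ext^1(F,F)$ this exhibits $[\,\mathrm{Spf}(\widehat{R}/\mathfrak{a})\,/\,G_x\,]$ as (the formal completion of) an étale neighbourhood of $x$ in $\MM$.

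Next I would invoke the local structure theorem for good moduli spaces: near a closed point $x$ with linearly reductive stabilizer $G_x$, the stack $\MM$ is étale-locally around $x$ isomorphic to a quotient stack $[\,\Spec B\,/\,G_x\,]$ in such a way that the good moduli space $M$ is étale-locally $\Spec(B^{G_x})$, and this étale chart is compatible with the versal deformation above. Passing to completions at $x$ and at $\pi(x)$, the Kuranishi description says $\widehat{B}\cong \widehat{R}/\mathfrak{a}$ as $G_x$-algebras (the completed local ring of the étale chart is the ring prorepresenting the equivariant Kuranishi/versal family). Since completion is compatible with taking the good moduli space under the good moduli space morphism — concretely, $\pi$ being a good moduli space means $\OO_M = (\pi_* \OO_{\MM})^{G_x}$ locally, and this is preserved under the faithfully flat completion maps because $G_x$ is linearly reductive, so taking $G_x$-invariants is exact and commutes with the inverse limit over $\mathfrak{m}^n$ — we obtain
$$\widehat{\OO}_{M,\pi(x)} \cong \widehat{B}^{G_x} \cong (\widehat{R}/\mathfrak{a})^{G_x}.$$

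Finally, for the last isomorphism $(\widehat{R}/\mathfrak{a})^{G_x} \cong \widehat{R}^{G_x}/(\mathfrak{a}\cap \widehat{R}^{G_x})$, I would argue that since $G_x$ is linearly reductive the functor of $G_x$-invariants is exact, so applying it to the short exact sequence $0 \to \mathfrak{a} \to \widehat{R} \to \widehat{R}/\mathfrak{a} \to 0$ gives $(\widehat{R}/\mathfrak{a})^{G_x} \cong \widehat{R}^{G_x}/\mathfrak{a}^{G_x}$, and $\mathfrak{a}^{G_x} = \mathfrak{a}\cap \widehat{R}^{G_x}$ by definition of the invariant submodule. (One should note $\mathfrak{a}$ is a $G_x$-stable ideal since $\kappa$ is equivariant, and that $\widehat{R}=\C[[r_1,\dots,r_m]]$ with the linear $G_x$-action on the $r_i$ dual to the $G_x$-representation $\Ext^1(F,F)$, so invariants behave well with respect to the $\mathfrak{m}$-adic filtration.)

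I expect the main obstacle to be the careful matching of the formal (Spf) picture coming from the Kuranishi construction with the honest étale-local (Spec) picture furnished by the Alper--Hall--Rydh theorem, i.e.\ verifying that the ring prorepresenting the equivariant Kuranishi functor really is the completed local ring of an étale chart for $\MM$ at $x$, compatibly with the good moduli space morphism; this is precisely the content that generalizes \cite[Lemma 3.2]{LPZ:elliptic}, and everything else (exactness of invariants, commuting with completion) is formal once linear reductivity of $G_x$ is in hand.
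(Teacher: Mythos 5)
Your strategy (the Alper--Hall--Rydh \'etale slice theorem compatible with good moduli spaces, plus an identification of the completed local ring of the slice with the Kuranishi ring) is a plausible alternative route, but the step you yourself flag as the ``main obstacle'' is a genuine gap, and it is precisely the mathematical content of the lemma. You assert that $\widehat{B}\cong \widehat{R}/\mathfrak{a}$ as $G_x$-algebras because ``the completed local ring of the \'etale chart is the ring prorepresenting the equivariant Kuranishi/versal family'', but this is not automatic: both $\widehat{R}/\mathfrak{a}$ and the completion of $B$ at the distinguished fixed point are hulls (miniversal rings) for $\Def_F$, and hulls are unique only up to a non-canonical, a priori non-equivariant isomorphism. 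To justify your identification you would need an equivariant versality/uniqueness statement (e.g.\ Rim's theorem that for a linearly reductive group the hull admits a $G$-structure, unique up to $G$-equivariant isomorphism), together with the refined (``strongly \'etale'') form of the slice theorem guaranteeing that the chart can be chosen so that $\Spec(B^{G_x})\to M$ is \'etale at the image point -- which uses that $x$ is a closed point of $\MM$ and that $\MM$ admits a good moduli space. None of this is carried out, so the first isomorphism is missing its key step. The remaining points are fine for linearly reductive $G_x$: invariants commute with completion via the Reynolds operator, and exactness of invariants gives $(\widehat{R}/\mathfrak{a})^{G_x}\cong \widehat{R}^{G_x}/(\mathfrak{a}\cap\widehat{R}^{G_x})$, which is also how the paper handles that part (by citing \cite[Lemma 3.2]{LPZ:elliptic}).

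For comparison, the paper's proof avoids both the slice theorem and the hull comparison: it forms the quotient stack $\KK=[\Spec(\widehat{R}/\mathfrak{a})/G_x]$, notes that the $G_x$-equivariant formal Kuranishi family gives compatible morphisms $\KK_n\to\MM$ from the thickenings, and uses coherent completeness of $\KK$ along $x$ (\cite[Theorem 1.3]{AlperHallRydh}) together with Tannakian effectivization (\cite[Corollary 2.6]{AlperHallRydh}) to produce an honest morphism $\KK\to\MM$; the comparison of completed local rings of the good moduli spaces is then the argument of \cite[Lemma 3.2]{LPZ:elliptic}. If you wish to pursue your route, the cleanest fix is either to import Rim's equivariant hull theorem to pin down $\widehat{B}\cong\widehat{R}/\mathfrak{a}$ equivariantly and verify the strongly \'etale compatibility with the good moduli space, or to switch to the effectivization argument just described.
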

\begin{proof}
Since $G_x$ is linearly reductive, by \cite[Lemma 3.2]{LPZ:elliptic}, $G_x$ acts on the quotient $\widehat{R}/ \mathfrak{a}$ and $(\widehat{R}/ \mathfrak{a})^{G_x} \cong \widehat{R}^{G_x}/ (\mathfrak{a} \cap \widehat{R}^{G_x})$.  

Note that $(\widehat{R}/ \mathfrak{a})^{G_x}$ is a complete local ring. Indeed, $\widehat{R}^{G_x} \cong \widehat{R^{G_x}}$ is a complete Noetherian local ring. It follows that the quotient $\widehat{R}^{G_x}/ (\mathfrak{a} \cap \widehat{R}^{G_x})$ is also a complete local ring, as we claimed.

Consider the stack $\KK:=[\Spec{(\widehat{R}/\mathfrak{a})}/G_x]$, whose good moduli space is $\KK \to \Spec(\widehat{R}/\mathfrak{a})^{G_x}$, and denote by $\KK_n$ the $n$-th thickening of $\KK$ at $0$. As a consequence of the above computation, by \cite[Theorem 1.3]{AlperHallRydh} the stack $\KK$ is coherently complete along $x$. This implies by  \cite[Corollary 2.6]{AlperHallRydh} that the compatible collection of morphisms $\KK_n \to \MM$, or equivalently the collection of equivariant compatible objects $(F_n, \varphi_n) \in \Def_F((\widehat{R}/\mathfrak{a})/\mathfrak{m}^{n+1})$ for every $n$, effectivizes to $\KK \to \MM$. Then the same argument as in \cite[Lemma 3.2]{LPZ:elliptic} implies the desired statement.
\end{proof}

\section{Formality results and moduli spaces}

In this section we prove Theorems \ref{thm_formality_intro} and \ref{thm_formality_Enriques_intro}. Then we explain the applications to the study of moduli spaces and to the formality conjecture in the case of Kuznetsov components of Gushel--Mukai threefolds and quartic double solids.

\subsection{Preliminaries on formality and quasi-cyclic DG-Lie algebras}

In this section, we review an algebraic approach introduced by Bandiera-Manetti-Meazzini \cite{BMM2} to the formality of DG-Lie algebras.

\begin{dfn}
A DG-Lie algebra $(L, d, [-,-])$ is \textit{formal} if it is quasi-isomorphic to its cohomology DG-Lie algebra $( H^\cdot(L), 0, [-,-]).$
\end{dfn}

A central notion in Bandiera-Manetti-Meazzini's approach is that of a quasi-cyclic DG-Lie algebra.

\begin{dfn}\label{quasicyclicdgla}
A DG-Lie algebra $(L, d, [-,-])$ with finite dimensional cohomology is called quasi-cyclic of degree $n$ if it admits a symmetric bi-linear pairing $(-,-): L\otimes L \to \mathbb C[-n]$ of degree $-n$ such that for any elements $f, g, h \in L,$ we have
\begin{enumerate}[(i)]
    \item $(df, g) + (-1)^{|f|}(f, dg)=0,$ if $f$ is homogeneous of degree $|f|$;
    \item $([f,g], h)=(f, [g,h]);$
    \item the induced pairing on cohomology $(-,-): H^\cdot(L)\otimes  H^\cdot(L) \to \mathbb C[-n]$ is non-degenerate.
\end{enumerate}

\end{dfn}

In the next section we will apply the following criterion for formality.

\begin{thm}[{\cite[Theorem 1.2]{BMM2}}]\label{BMMthm}
Let $(L, d, [-,-], (-,-))$ be a quasi-cyclic DG-Lie algebra of degree $n\leq 2$. Assume it admits a splitting $L= K \oplus H \oplus d(K)$ such that 
\begin{enumerate}[{(a)}]
    \item $H^i=0$ for $i<0$;
    \item $H^0 \subset L^0$ is closed with respect to the bracket $[-,-]$;
    \item for all $i>0,$ $H^i, K^i \subset L^i$ are $H^0$-submodules with respect to the adjoint action.
\end{enumerate}
Then the DG-Lie algebra $(L,d,[-,-])$ is formal.
\end{thm}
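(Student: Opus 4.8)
The plan is to replace $L$ by a minimal model, arrange that this minimal model is again cyclic, and then remove its higher brackets by an obstruction-theoretic induction, the obstructions vanishing for degree reasons. First I would use the splitting $L = K \oplus H \oplus d(K)$ to build a contraction of $L$ onto its cohomology $H := H^\bullet(L)$ endowed with the zero differential: the projection $\pi\colon L \twoheadrightarrow H$, the inclusion $\iota\colon H \hookrightarrow L$, and the homotopy $h\colon L\to L$ equal to $(d|_K)^{-1}$ on $d(K)$ and to $0$ on $H\oplus K$, so that $\mathrm{id}_L-\iota\pi = dh+hd$. Since $d$ is graded skew-self-adjoint for $(-,-)$ by property (i) and $(-,-)$ is non-degenerate on $H$ by property (iii), one can — possibly after replacing the splitting by a compatible ``Hodge'' refinement — take $h$ to be graded skew-self-adjoint with $h^2=0$, $h\iota=0$, $\pi h=0$; this is the standard existence of a cyclic special deformation retract, proved by a purely linear-algebra argument. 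Applying the homotopy transfer theorem then produces an $L_\infty$-structure $\{\ell_k\}_{k\geq 1}$ on $H$ with $\ell_1=0$ and $\ell_2$ the Lie bracket induced by $[-,-]$, and an $L_\infty$-quasi-isomorphism $L\simeq (H,\{\ell_k\})$; because the contraction is compatible with the pairing and property (ii) holds, the transferred structure is \emph{cyclic} of degree $n$ for the pairing that $(-,-)$ induces on $H$. Since an $L_\infty$-quasi-isomorphism between DG-Lie algebras yields a zig-zag of DG-Lie algebra quasi-isomorphisms, it then suffices to prove that $(H,\{\ell_k\})$ is $L_\infty$-isomorphic to the cohomology DG-Lie algebra $(H^\bullet(L),0,[-,-])$; that is, that the higher brackets $\ell_{\geq 3}$ can be gauged away.

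Next I would run the obstruction theory. Property (a) together with non-degeneracy of the induced pairing forces $H^i=0$ for $i\notin[0,n]$, so $H$ is concentrated in degrees $0,1,\dots,n$ with $n\leq 2$. Encode a cyclic $L_\infty$-structure on $H$ as a formal potential $W=W_3+W_4+\cdots$ on the formal shifted symplectic space $H[1]$, of a fixed internal degree, with $\{W,W\}=0$, where the weight-$m$ summand $W_m$ records $\ell_{m-1}$ and $W_3$ records $\ell_2$; a cyclic $L_\infty$-isomorphism corresponds to a formal symplectomorphism fixing the origin and respecting the weight filtration. I would remove $W_{\geq 4}$ by induction on the least weight $m\geq 4$ with $W_m\neq 0$: the weight-$m$ component of $\{W,W\}=0$ reads $\{W_3,W_m\}=0$, i.e.\ $W_m$ is a cocycle for the Chevalley--Eilenberg differential $\{W_3,-\}$ of the graded Lie algebra $(H^\bullet,\ell_2)$ on $\widehat{\mathrm{Sym}}(H[1]^\vee)$, and a symplectomorphism generated by a Hamiltonian of weight $m-1$ removes $W_m$ precisely when $W_m$ is a coboundary. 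Hence the obstruction to bringing $W$ to its quadratic part $W_3$ — that is, the obstruction to formality — lies in the weight-$(\geq 4)$, fixed-internal-degree part of the cyclic Chevalley--Eilenberg cohomology of $H^\bullet$.

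The hard part will be the vanishing of this obstruction, and it is here that the hypotheses $n\leq 2$ and (b)--(c) are genuinely used. The relevant cochains split according to how many of the $m$ cotangent factors lie in $(H^0)^\vee$, $(H^1)^\vee$ and $(H^2)^\vee$; because the internal degree is pinned down while each factor contributes a controlled amount, for $n=0$ there are no such cochains of weight $\geq 4$ at all and formality is immediate, while for $n=1$ every weight-$m$ cochain lies in $\wedge^2(H^0)^\vee\otimes\mathrm{Sym}^{m-2}(H^1)^\vee$, so — using (b), that $H^0$ is a Lie subalgebra, and (c), that $H^1$ and hence $\mathrm{Sym}^\bullet(H^1)^\vee$ is an $H^0$-module — the obstruction lands in $H^2_{\mathrm{CE}}(H^0;\mathrm{Sym}^{m-2}(H^1)^\vee)$; for $n=2$ there is in addition a family built from one more factor $(H^2)^\vee$ or an extra pair $(H^1)^\vee\otimes(H^1)^\vee$, glued to the previous one by the differential induced by the bracket $\wedge^2 H^1\to H^2$. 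In each case I would conclude by analysing this (small) total complex together with the further constraints imposed on the obstruction cocycle by the higher-weight components of $\{W,W\}=0$ — equivalently, by cyclicity together with the precise shape of $\ell_2$ — which is what rules out the a priori non-zero cohomology classes. This cohomology computation, together with the verification in the first step that the transferred structure really is cyclic, is the crux of the argument, and a careless choice of splitting or of the contracting homotopy is precisely what would break it.
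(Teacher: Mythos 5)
This theorem is quoted in the paper from \cite{BMM2} without proof, so the comparison is with the argument there. Your first step --- building a contraction from the splitting $L=K\oplus H\oplus d(K)$, arranging it to be compatible with the pairing, and transferring a cyclic $L_\infty$-structure $\{\ell_k\}$ to $H^\bullet(L)$ --- is exactly the right setup, as is the degree bookkeeping: non-degeneracy of the degree-$(-n)$ pairing forces $H^i=0$ for $i\notin[0,n]$, and for $k\ge 3$ the tensor $(\ell_k(x_1,\dots,x_k),x_{k+1})$ can be non-zero only when $\sum|x_i|=k+n-2<k+1$, so at least one argument must lie in $H^0$.

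The gap is in your final step. You propose to remove $\ell_{\ge 3}$ by gauging, placing the obstruction in a cyclic Chevalley--Eilenberg cohomology of $(H^\bullet,\ell_2)$, and then you defer the vanishing to an ``analysis of this small total complex together with further constraints'' that you never carry out. That deferred computation is the entire content of the theorem, and the cohomological route you set up does not obviously close: the groups you land in, e.g.\ $H^2_{\mathrm{CE}}(H^0;\mathrm{Sym}^{m-2}(H^1)^\vee)$, are non-zero for a general Lie algebra $H^0$ and module, so formality cannot follow from a degree count alone --- this is where hypotheses (b) and (c) must enter, and your sketch never uses them concretely. The actual argument is more direct and avoids obstruction theory altogether: one shows that the \emph{transferred brackets themselves vanish identically} for $k\ge 3$. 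By cyclic symmetry one may rotate the forced $H^0$-argument into an input slot of $\ell_k$; then in every rooted tree of the transfer formula the vertex receiving that input $x\in H^0$ has its other input either in $H$ (a leaf) or in $K$ (an $h$-output), so by (b) and (c) the output $[x,-]$ lies in $H\oplus K=\ker h$ if the vertex is internal, and in $K=\ker\pi$ if it is the root vertex of a tree with $k\ge 3$ leaves. Hence every tree contributes zero, $\ell_k=0$ for $k\ge3$ by non-degeneracy of the induced pairing, and the transfer quasi-isomorphism already exhibits $L$ as formal --- no gauging, and no cohomology computation, is needed. Your closing remark that ``a careless choice of splitting or homotopy would break it'' correctly identifies the sensitivity, but the proof as written does not establish the result.
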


\subsection{K3 surfaces}

Let $X$ be a K3 surface and fix an object $F \in \Db(X)$. Recall the DG-Lie algebra $(L, d, [-,-])$ defined in \eqref{defofL} which is representing $\RHom(F,F)$, for $F \in \Db(X)$. Then a nondegenerate degree $2$ bilinear form on $L$ can be defined as done in \cite[Section 5]{BMM2} for coherent sheaves. More precisely, given $f, g\in L,$ define the pairing of $f$ and $g$ as
\begin{equation}\label{defofpairing_surface}
    (f,g):= \int_Y Tr(f\wedge g) \wedge \omega,
\end{equation}
where $Tr(-): L \to \oplus_q A^{0,q}(Y)$ is the trace map and $\omega \in H^{0}(Y,\Omega_Y^2)\cong \mathbb C$ is the unique (up to scaling) nontrivial class. Then $(L, d, [-,-], (-,-))$ is a degree $2$ quasi-cyclic DG-Lie algebra representative of $\RHom(F, F).$

\begin{thm}
\label{thm_formality}
 Let $X$ be a K3 surface and $F\in \Db(X)$ be a universally gluable object and the automorphisms group $\emph{Aut}(F)$ is linearly reductive. Then the derived endomorphism Lie algebra $\RHom(F, F)$ is formal.
\end{thm}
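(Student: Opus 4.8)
The plan is to apply the Bandiera--Manetti--Meazzini criterion (Theorem \ref{BMMthm}) directly to the Dolbeault DG-Lie algebra $(L,d,[-,-],(-,-))$ defined in \eqref{defofL} and \eqref{defofpairing_surface}, built from the special locally free replacement $\EE^\cdot$ of $F$ produced in Lemma \ref{lemma_Greplacement}. Since $X$ is a K3 surface, the pairing \eqref{defofpairing_surface} has degree $n=2$, so we are exactly in the range $n\le 2$ required by Theorem \ref{BMMthm}; one first checks that $(L,d,[-,-],(-,-))$ is genuinely quasi-cyclic of degree $2$, i.e.\ properties (i)--(iii) of Definition \ref{quasicyclicdgla}. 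Properties (i) and (ii) are the usual compatibility of the trace pairing with $\bar\partial$, $d_\EE$ and the wedge/bracket, which hold formally (Stokes' theorem plus the cyclicity of the trace on the total Hom-complex, together with the fact that $\omega$ is a holomorphic $2$-form so $\bar\partial\omega=0$); property (iii), nondegeneracy of the induced pairing on $H^\cdot(L)\cong\Ext^\cdot(F,F)$, is Serre duality on the K3 surface combined with the trivialization $H^0(X,\Omega_X^2)\cong\C$.

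The crux is then to produce a splitting $L=K\oplus H\oplus d(K)$ satisfying conditions (a), (b), (c) of Theorem \ref{BMMthm}. Condition (a), $H^i=0$ for $i<0$, is precisely the hypothesis that $F$ is universally gluable, $\Ext^i(F,F)=0$ for $i<0$, since $H^i(L)\cong\Ext^i(F,F)$. For (b) and (c) I would use Lemma \ref{lemma_Greplacement2}: because the replacement $\EE^\cdot$ is $\Aut(F)$-equivariant, each graded piece $L^i$ is a rational representation of $G:=\Aut(F)$, and the differential $d$, the bracket $[-,-]$ and the pairing $(-,-)$ are all $G$-equivariant. Since $G$ is assumed linearly reductive, every $G$-submodule of $L^i$ has a $G$-invariant complement, so one can choose the splitting \eqref{eq_splitting} to be $G$-equivariant exactly as in \cite[Theorem 5.1]{BMM2} and as already recalled in Section \ref{sec_kuranishi}; in particular $H^i$ and $K^i$ are $G$-submodules of $L^i$. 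The remaining point is to upgrade ``$G$-submodule'' to ``$H^0$-submodule with respect to the adjoint action'': here one notes that $H^0(L)\cong\Hom(F,F)=\mathrm{Lie}(G)$ (or rather the Lie algebra of the linearly reductive group $G$ acting on the $L^i$), so the adjoint action of $H^0$ on $L^i$ is the differential of the $G$-action, and a $G$-submodule is automatically stable under this infinitesimal action; this gives (c), and (b) follows because $H^0\subset Z^0(L)$ is a subalgebra (it is the image of the algebra $\Hom(F,F)$, closed under bracket).

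The main obstacle I expect is the equivariance bookkeeping rather than any single hard estimate: one must make sure that the Dolbeault model $L$ genuinely carries a $G$-action for which $\bar\partial$ is equivariant (this is where the careful $G$-equivariant construction of $\EE^\cdot$ in Lemma \ref{lemma_Greplacement}, and the rationality statement of Lemma \ref{lemma_Greplacement2} via \cite[Lemma 3.5, Theorem 3.8]{BMM1}, are essential), and that the identification $H^0(L)\cong\mathrm{Lie}(\Aut(F))$ is compatible with the adjoint action used in hypothesis (c). A secondary technical point is checking that the splitting can be chosen simultaneously compatible with the grading, the Hodge decomposition coming from $\bar\partial$, and the $G$-action; since $G$ is linearly reductive this is a standard averaging/semisimplicity argument, performed degree by degree as in \cite{BMM2}. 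Once these are in place, Theorem \ref{BMMthm} applies verbatim and yields that $(L,d,[-,-])$, hence $\RHom(F,F)$, is formal.
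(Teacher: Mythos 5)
Your proposal is correct and follows essentially the same route as the paper: build the $\Aut(F)$-equivariant Dolbeault model from the equivariant locally free replacement (Lemmas \ref{lemma_Greplacement} and \ref{lemma_Greplacement2}), equip it with the degree~$2$ trace pairing \eqref{defofpairing_surface} to get a quasi-cyclic DG-Lie algebra, and use linear reductivity to produce a $G$-invariant splitting with $H^0$ the image of $\Hom(F,F)$ (hence bracket-closed) and with $G$-invariance of $H^i,K^i$ upgrading to $H^0$-stability via $\mathrm{Lie}(\Aut(F))\cong\Hom(F,F)$, so that Theorem \ref{BMMthm} applies. This matches the paper's proof of Theorem \ref{thm_formality}, which argues exactly as in \cite[Theorem 5.1]{BMM2}; your extra remark about compatibility with the Hodge decomposition is not actually needed, since the criterion only requires the splitting $L=K\oplus H\oplus d(K)$.
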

\begin{proof}
By \eqref{defofpairing_surface}, \Cref{lemma_Greplacement} and \Cref{lemma_Greplacement2}, we have a degree $2$ $\Aut(F)$-equivariant quasi-cyclic DG-Lie algebra $(L, d, [-,-], (-,-))$ representing $\RHom(F, F),$ where each degree of $L$ is a rational representation of $\Aut(F)$.  

We then argue as in \cite[Theorem 5.1]{BMM2}. Note that $G:=\Aut(F)$ acts faithfully on the group of $0$-cycles $Z^0(L) \subset L^0.$ It induces a $G$-equivariant Lie algebra embedding $\iota \colon \Hom(F, F) \hookrightarrow Z^0(L).$ Composing $\iota$ with the quotient $Z^0(L)\to H^0(L)\cong \Hom(F, F)$ yields an isomorphism, and therefore a $G$-invariant splitting $Z^0(L) = H^0(L) \oplus d(L^{-1})$, since $Z^0(L)\subset L^0$ is a rational representation of the linearly reductive group $G.$ Similarly, for each $i$, we have $L^i = Z^i(L)\oplus K^i$ for some $G$-invariant subspace $K^i\subset L^i$ and then $Z^i(L)= H^i(L) \oplus d(K^{i-1}).$ This gives a decomposition of $L$ satisfying the conditions in \Cref{BMMthm}, and therefore $\RHom(F,F)$ is formal.  
\end{proof}

\begin{rmk}
Theorem \ref{thm_formality} has been proved by \cite{BMM2} for coherent sheaves with linearly reductive automorphisms group, by \cite{BudurZhang} for polystable sheaves with respect to an arbitrary polarization and for complexes in the bounded derived category which are semistable with respect to a generic Bridgeland stability condition, and by \cite[Theorem 3.2]{AS_update} for polystable complexes with respect to any stability condition. See also \cite{Davison}.
\end{rmk}

Now consider a Mukai vector $v$ in the algebraic Mukai lattice $K_{\text{num}}(\Db(X))$ and a full numerical stability condition $\sigma$ on $\Db(X)$. Using Theorem \ref{thm_formality} one obtains a description of the local structure of the moduli space $M_\sigma(v)$ at the point corresponding to $F$. 

Let us first recall the construction of the quiver associated to a polystable object $F$ in $M_\sigma(v)$, which has been introduced in \cite[Proposition 6.1]{AS}, \cite[Section 4.3]{AS_update}. Write $F=\oplus_{i=1}^s F_i \otimes V_i$, where $V_1, \dots, V_s$ are vector spaces of dimension $n_1, \dots, n_s$ and $F_1, \dots, F_s$ are distinct $\sigma$-stable objects. Set $G :=\Aut(F)$ and denote by $\mathfrak{g}$ its Lie algebra. The quiver $Q(F)=(I,E)$ associated to $F$ has vertex set $I=\lbrace 1, \dots, s \rbrace$ and edge set $E$ such that the number of edges between the $i$-th and the $j$-th vertex is equal to
$$
\begin{cases}
\ext^1(F_i,F_j) & \text{if } i \neq j,\\
\ext^1(F_i,F_i)/2 & \text{if } i=j.
\end{cases}
$$
Consider the quiver $\overline{Q}$ with the same vertex set as $Q(F)$ and edge set $\overline{E}:=E \sqcup E^{\text{op}}$. For $e \in \overline{E}$ denote by $s(e)$ and $t(e)$ the source and the target of $e$, respectively. Then there are $G$-equivariant isomorphisms
$$\text{Rep}(\overline{Q}):=\bigoplus_{e \in \overline{E}}\Hom(V_{s(e)},V_{t(e)}) \cong \Ext^1(F,F), \quad \mathfrak{g}^\vee \cong \Ext^2(F,F)$$
and through them the second order term of the Kuranishi map for $F$
$$\kappa_2 \colon \Ext^1(F,F) \to \Ext^2(F,F)$$
corresponds to the moment map
$$\mu \colon \text{Rep}(\overline{Q}) \to \mathfrak{g} \cong \mathfrak{g}^\vee, \quad (x, y^\vee) \mapsto \mu(x, y^\vee)=\sum_{e \in E} [x_e, y_e^\vee].$$
Note that this result is proved in \cite[Proposition 6.1]{AS} for polystable sheaves on a K3 surface, and the proof adapts line by line to this setting. 

Define the quotient
$$\mathfrak{M}_0:=\mu^{-1}(0) \sslash G$$
with respect to the trivial rational character (see \cite[Section 5]{AS} for more details on quiver varieties). 

The following result is due to \cite{AS, AS_update} as a consequence of formality. 

\begin{prop}[\cite{AS}, Theorem 6.5(i), \cite{AS_update}, Corollary 4.1]
\label{lemma_fromAS}
Let $F$ be a polystable object in $M_\sigma(v)$. Then there is a local analytic isomorphism
$$(\mathfrak{M}_0,0) \cong (M_\sigma(v), [F])$$
of germes of analytic spaces.
\end{prop}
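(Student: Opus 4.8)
The plan is to deduce the statement from the formality of $\RHom(F,F)$ established in \Cref{thm_formality}, the identification of the completed local ring of the good moduli space in \Cref{lemma_localstructuremoduli}, and the quiver/moment-map description of the quadratic Kuranishi map recalled above; the bulk of the argument then runs exactly as in \cite[Theorem~6.5]{AS} and \cite[Corollary~4.1]{AS_update}, our only new input being that formality is now available in this generality.

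First I would record that, since $F=\bigoplus_i F_i\otimes V_i$ is $\sigma$-polystable, its automorphism group $G:=\Aut(F)\cong\prod_i \mathrm{GL}(V_i)$ is linearly reductive, so \Cref{thm_formality} applies: the Dolbeault DG-Lie algebra $L$ of \eqref{defofL} representing $\RHom(F,F)$ is formal, and by \Cref{lemma_Greplacement2} together with the $G$-invariant splitting produced in the proof of \Cref{thm_formality}, the quasi-isomorphism $L\simeq (H^\bullet(L),0)$ can be arranged $G$-equivariantly. Consequently $F$ has the quadracity property, $G$-equivariantly: computing the Kuranishi construction of \cite[Section~4]{Manetti} for the formal model $(H^\bullet(L),0)$, the isomorphism $\phi$ of \cite[Lemma~4.2]{Manetti} is the identity and the Kuranishi map $\kappa$ coincides with its quadratic part $\kappa_2$ (the Yoneda square on $\Ext^1(F,F)$). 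Since hulls of deformation functors are preserved by ($G$-equivariant) quasi-isomorphism, this gives a $G$-equivariant isomorphism $\widehat R/\mathfrak a\cong \widehat{\C[\kappa_2^{-1}(0)]}$, the completion at the origin of the coordinate ring of the affine cone $\kappa_2^{-1}(0)\subset\Ext^1(F,F)$.

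Next I would invoke the quiver description stated just before the Proposition: under the $G$-equivariant identifications $\Ext^1(F,F)\cong\text{Rep}(\overline Q)$ and $\Ext^2(F,F)\cong\mathfrak g^\vee$, the map $\kappa_2$ is the moment map $\mu$, so $\kappa_2^{-1}(0)\cong\mu^{-1}(0)$ as $G$-schemes and hence $\widehat R/\mathfrak a\cong\widehat{\C[\mu^{-1}(0)]}$ $G$-equivariantly. Taking $G$-invariants, using \Cref{lemma_localstructuremoduli} (valid since $G$ is linearly reductive) and the fact that for reductive $G$ invariants commute with completion at the fixed origin, one obtains
$$\widehat{\OO}_{M_\sigma(v),[F]} \cong (\widehat{R}/\mathfrak{a})^{G} \cong \bigl(\,\widehat{\C[\mu^{-1}(0)]}\,\bigr)^{G} \cong \widehat{\C[\mu^{-1}(0)]^{G}} = \widehat{\OO}_{\mathfrak{M}_0,0},$$
i.e. the formal germs $(M_\sigma(v),[F])$ and $(\mathfrak M_0,0)$ are isomorphic.

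Finally I would upgrade this to an isomorphism of analytic germs. The clean geometric route is to use the $G$-equivariant analytic Kuranishi family of $F$ (as in \cite{LehnSorger}, in the present generality via \cite{AHLH}, \cite{AlperHallRydh} after analytification): $\MM_\sigma(v)$ is analytically locally near $[F]$ the quotient stack of the analytic Kuranishi slice by $G$, whose good moduli space is the good quotient of that slice; formality provides a $G$-equivariant analytic isomorphism of the Kuranishi slice with the quadratic cone $\mu^{-1}(0)$ near the origin, so $(M_\sigma(v),[F])\cong(\mu^{-1}(0)\sslash G,0)=(\mathfrak M_0,0)$ analytically. Alternatively one may keep the formal isomorphism above and conclude by Artin's approximation theorem, both sides being of finite type over $\C$. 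The main obstacle is exactly this last step: making precise (and $G$-equivariant) the analytic étale-slice description of the local structure of the stack $\MM_\sigma(v)$ around $[F]$ and the simultaneous compatibility of the formality quasi-isomorphism with the identification $\kappa_2=\mu$ — once these are set up, the remaining verifications are bookkeeping and are carried out verbatim in \cite{AS, AS_update}.
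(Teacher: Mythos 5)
Your argument follows essentially the same route as the paper's proof: apply \Cref{lemma_localstructuremoduli} (with $G=\Aut(F)$ linearly reductive), use \Cref{thm_formality} to get the quadracity property $\kappa^{-1}(0)\cong\kappa_2^{-1}(0)$, and identify $\kappa_2$ with the moment map $\mu$ via the $G$-equivariant quiver description of \cite[Proposition 6.1]{AS}, so the proposal is correct. Your extra discussion of upgrading the formal isomorphism of completed local rings to an analytic isomorphism (via Artin approximation or equivariant analytic slices) is a point the paper passes over silently, but it is standard and does not change the substance of the argument.
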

\begin{proof}
By Lemma \ref{lemma_localstructuremoduli} we have a $G$-equivariant local analytic isomorphism
$$(M_{\sigma(v)}(v), [F]) \cong (\kappa^{-1}(0) \sslash G, 0)$$
where $\kappa^{-1}(0) \sslash G=\Spec(\widehat{R}/\mathfrak{a})^{G}$ with $R:=\C[\Ext^1(F,F)]$. Theorem \ref{thm_formality} implies the quadracity property, i.e.\ 
$$\kappa^{-1}(0) \cong \kappa_2^{-1}(0).$$
Since $\kappa_2^{-1}(0) \cong \mu^{-1}(0)$ $G$-equivariantly by \cite[Proposition 6.1]{AS}, we deduce that $M_\sigma(v)$ and $\mathfrak{M}_0$ are isomorphic locally around $[F]$ and $0$, respectively.
\end{proof} 

\subsection{Group actions on categories and Enriques categories}

The definition of Enriques category involves the notions of group action on a category and category of invariant objects. We briefly recall here those definitions, for more details see \cite{BP}.

\begin{dfn}
\label{definition-G-action} 
Let $G$ be a finite group. Let $x$ be an object of an $\infty$-category $\AAA$. 
A \emph{$G$-action} on $x$ is a functor $\phi \colon BG \to \AAA$ such that $\phi(*) = x$, where $BG$ denotes the classifying space of $G$ regarded as an $\infty$-groupoid and $* \in BG$ is the unique object. Given a $G$-action $\phi$ on $x$, the  \emph{$G$-invariants} are
$$x^G = \lim(\phi)$$
provided the displayed limit exists. 
\end{dfn}

In the next we only consider group actions on $\C$-linear categories, i.e.\ small idempotent-complete stable $\infty$-categories equipped with a module structure over the $\infty$-category of finite complexes of finite dimensional $\C$-vector spaces. By \cite{Cohn} the theory of $\C$-linear categories is equivalent to that of small pretriangulated DG categories over $\C$. In this setting, the invariants (called the invariant category) exist since the $\infty$-category of $\C$-linear categories admits all limits by \cite{Mathew}.   

A $G$-action on a $\C$-linear category $\DD$ induces a $G$-action on its homotopy category $h\DD$, which is equivalent to the data of a group morphism $G \to \Aut(h\DD)$ (see \cite[Section 3.1.1]{BP}). Conversely, under some suitable conditions, a $G$-action on a $\C$-linear triangulated category $\DD$ can be lifted to an enhancement of $\DD$ (see \cite[Corollary 3.4]{BP}). We will always work under these assumptions. As a consequence, objects in $\DD^G$ consist of pairs $(F, \theta)$, where $F \in \DD$ and $\theta$ is a collection of isomorphisms compatible with the group structure on $G$ (see \cite[Section 3.1.2]{BP} for the precise definition). The forgetful functor is given by
$$\Forg \colon \DD^G \to \DD, \quad (F, \theta) \mapsto F.$$

In the following definition, we consider an enhanced triangulated category and by a $G$-action on a triangulated category we mean an action that lifts to an $\infty$-action on the enhancement. 

\begin{dfn}
\label{definition-CY2-Enriques}
Let $X$ be a smooth projective variety over $\C$. An admissible subcategory $\CC$ of $\Db(X)$ is an \emph{Enriques category} if it is equipped with a $\Z / 2\Z$-action whose generator $\tau$ is a nontrivial autoequivalence of $\CC$ satisfying $\S_{\CC} \simeq \tau \circ [2]$, where $\S_{\CC}$ is the Serre functor of $\CC$.
\end{dfn}

Recall that a $\C$-linear category $\DD$ is a CY2 category if its Serre functor satisfies $\S_{\DD} \simeq [2]$. Generalizing the geometric case of Enriques surfaces and their associated covering K3 surface, we have the following result.

\begin{lem}[{\cite[Lemmas 4.5 and 4.6]{BP}}]
\label{lemma-Enriques-CY2}
Let $\CC$ be an Enriques category and $\DD = \CC^{\Z/2\Z}$ be its invariant category for the $\Z/2\Z$-action on $\CC$. Then $\DD$ is a CY2 category, called the \emph{CY2 cover} of $\CC$. Moreover, $\DD$ is equipped with a natural $\Z/2\Z$-action, called the \emph{residual $\Z/2\Z$-action}, such that there is an equivalence $\CC \simeq \DD^{\Z/2\Z}$. 
\end{lem}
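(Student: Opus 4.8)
The plan is to analyze $\DD=\CC^{\Z/2\Z}$ through the forgetful functor $\Forg\colon\DD\to\CC$. Since $2$ is invertible in $\C$, homotopy fixed points agree with strict fixed points, so for $A,B\in\DD$ one has $\Hom_{\DD}(A,B)=\Hom_{\CC}(\Forg A,\Forg B)^{\Z/2\Z}$, the action on the right being built from $\tau$ together with the equivariant structures of $A$ and $B$; in particular $\DD$ is proper, since invariant subspaces of finite-dimensional spaces are finite-dimensional. The first assertion is then the claim that the shift $[2]$ is a Serre functor for $\DD$.

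For the CY2 property I would chase Serre duality of $\CC$ through this description. Writing $A=(\Forg A,\theta_A)$ with $\theta_A\colon\tau(\Forg A)\xrightarrow{\sim}\Forg A$, the Serre pairing of $\CC$ gives natural isomorphisms
\begin{align*}
\Hom_{\CC}(\Forg A,\Forg B)^\vee &\cong \Hom_{\CC}(\Forg B,\S_{\CC}\Forg A)\cong \Hom_{\CC}(\Forg B,\tau(\Forg A)[2])\\
&\cong \Hom_{\CC}(\Forg B,\Forg A[2]),
\end{align*}
where the middle step uses $\S_{\CC}\simeq\tau\circ[2]$ and the last step uses $\theta_A$. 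One then checks that this composite is $\Z/2\Z$-equivariant; granting this, and using that over $\C$ the invariants functor commutes with $\C$-linear duality, passing to $\Z/2\Z$-invariants yields a natural isomorphism $\Hom_{\DD}(A,B)^\vee\cong\Hom_{\DD}(B,A[2])$, i.e.\ $[2]$ is the Serre functor of $\DD$. Conceptually this says that $\S_{\CC}\simeq\tau\circ[2]$ descends to the Serre functor of $\CC^{\Z/2\Z}$, and that, because every object of $\DD$ is $\tau$-invariant, the descended copy of $\tau$ is canonically the identity, leaving only the shift.

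For the residual action I would twist by the Pontryagin dual $\widehat{\Z/2\Z}\cong\Z/2\Z$: its nontrivial element sends an object $(E,\theta)$ of $\DD$ to $(E,-\theta)$, which is again an object of $\DD$ because replacing $\theta$ by $-\theta$ alters the cocycle $\theta\circ\tau(\theta)$ by two sign factors that cancel. This is an involutive autoequivalence $\rho$ of $\DD$, which one promotes to a $\Z/2\Z$-action on the enhancement. It remains to produce the equivalence $\CC\simeq\DD^{\Z/2\Z}$ for this residual action: an object of $\DD^{\Z/2\Z}$ is a pair consisting of $(E,\theta)\in\DD$ and an isomorphism $\psi\colon(E,-\theta)\xrightarrow{\sim}(E,\theta)$ in $\DD$ satisfying a cocycle condition; unwinding, $\psi$ is an involution of $E$ in $\CC$, and the constraint relating $\theta$ to $\pm\psi$ forces $\theta$ to identify $\tau(E_+)$ with $E_-$, where $E=E_+\oplus E_-$ is the eigenspace decomposition of $\psi$. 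Hence the whole package is recovered from $E_+\in\CC$ alone, and $(E,\theta,\psi)\mapsto E_+=\mathrm{im}\bigl(\tfrac{1}{2}(1+\psi)\bigr)$ is fully faithful and essentially surjective, so $\DD^{\Z/2\Z}\simeq\CC$. This is the Galois-descent statement $(\CC^{G})^{\widehat G}\simeq\CC$ for the finite cyclic group $G=\Z/2\Z$ with $|G|$ invertible.

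The step I expect to be the main obstacle is the coherence bookkeeping in the first two paragraphs: verifying that the chain of isomorphisms implementing Serre duality on $\DD$ is genuinely $\Z/2\Z$-equivariant requires reconciling $\theta_A$, the coherence isomorphism $\tau^2\simeq\mathrm{id}$, and the canonical isomorphism $\tau\circ\S_{\CC}\simeq\S_{\CC}\circ\tau$ simultaneously, and likewise the descent equivalence must be checked compatibly with all higher homotopies. This is cleanest to organize in the $\infty$-categorical language of \cite{BP}, where a $\Z/2\Z$-action is a functor out of the classifying space $B(\Z/2\Z)$ and all of the identifications above are produced functorially; in that setting the descent $(\CC^{\Z/2\Z})^{\widehat{\Z/2\Z}}\simeq\CC$ falls under the general theory of module categories over $\mathrm{Rep}(\Z/2\Z)$.
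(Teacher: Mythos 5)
This lemma is not proved in the paper at all: it is imported verbatim from \cite[Lemmas 4.5 and 4.6]{BP}, so there is no internal argument to compare against, and your proposal is best measured against the proof in that reference. Measured that way, your sketch reconstructs the standard argument correctly and with the same ingredients: since $2$ is invertible, the mapping complexes in the limit $\CC^{\Z/2\Z}$ are computed by $\Z/2\Z$-invariants of those of $\CC$; Serre duality for $\CC$ together with $\S_{\CC}\simeq\tau\circ[2]$ descends to the invariant category, where the equivariant structures $\theta$ trivialize the leftover copy of $\tau$, giving $\S_{\DD}\simeq[2]$; the residual action is the dual-group twist $(E,\theta)\mapsto(E,-\theta)$; and $\CC\simeq\DD^{\Z/2\Z}$ is the descent statement $(\CC^{G})^{\widehat{G}}\simeq\CC$ of Elagin type, which your eigenspace unwinding proves correctly (idempotent completeness of the admissible subcategory $\CC$ is what lets you split $\psi$, and the cocycle condition is what gives $\psi^{2}=\mathrm{id}$, since the twist does not change morphisms). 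You also flag the genuinely delicate points accurately: the equivariance of the Serre pairing, i.e.\ the canonical linearization of $\S_{\CC}$ coming from its commutation with $\tau$ and its compatibility with $\tau^{2}\simeq\mathrm{id}$, and the fact that everything must be carried out at the level of the enhancement, since the paper later needs the residual action as an action on $\DD^{\mathrm{dg}}$ (Theorem \ref{thm_formality_Enriques}, Corollary \ref{cor_qdsandspecGM3}); deferring those coherences to the $\infty$-categorical framework of \cite{BP} is exactly how the cited proof handles them. Two small wording corrections: ``homotopy fixed points agree with strict fixed points'' should rather be the statement that the limit over $B(\Z/2\Z)$ of the mapping complexes is computed by taking $\Z/2\Z$-invariants because higher group cohomology of a finite group vanishes in characteristic $0$ (there is no meaningful ``strict fixed point category'' here); and the step $(V^{G})^{\vee}\cong(V^{\vee})^{G}$ is justified by semisimplicity of $\C$-representations of the finite group, which is worth saying explicitly since it is where characteristic $0$ enters the duality argument.
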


\subsection{Proof of Theorem \ref{thm_formality_Enriques_intro}}

Let $\CC$ be an Enriques category. In all examples that we will consider, its CY2 cover $\DD$ is an admissible subcategory of the bounded derived category of a smooth projective variety. Through the equivalence $\CC \simeq \DD^{\Z/2\Z}$ we can identify every object in $\CC$ with an object in the invariant category $\DD^{\Z/2\Z}$. 

On the level of DG categories, the theory of equivariant categories has been developed by \cite{Elagin}. Starting with a $\Z/2\Z$-equivariant DG enhancement $\DD^{dg}$ of $\DD$ (i.e.\ a DG enhancement of $\DD$ with a $\Z/2\Z$-action), he constructed a specific DG enhancement $(\DD^{\Z/2\Z})^{dg}$ of $\DD^{\Z/2\Z}$, given by taking the equivariant category of $\DD^{dg}$ and passing to perfect objects. For $(F, \theta) \in \DD^{\Z/2\Z}$, the sets $\Hom_{(\DD^{\Z/2\Z})^{\text{dg}}}((F, \theta), (F, \theta))$ and $\Hom_{\DD^{\text{dg}}}(F, F)$ are DG algebras, and we can define the usual Lie bracket on them by
\begin{equation} \label{eq_bracket}
[f,g]:=f \circ g - (-1)^{|f||g|}g \circ f.
\end{equation}
Here $\circ$ is the composition on $\Hom_{(\DD^{\Z/2\Z})^{\text{dg}}}((F, \theta), (F, \theta))$ or $\Hom_{\DD^{\text{dg}}}(F, F)$, respectively. It follows from the definition and the fact that $\circ$ is a morphism of complexes that $[-,-]$ is a Lie bracket.

In the next statement, we compare the formality of $\Hom_{(\DD^{\Z/2\Z})^{\text{dg}}}((F, \theta), (F, \theta))$ and $\Hom_{\DD^{\text{dg}}}(F, F)$. By abuse of terminology, we will refer to the formality of $\Hom_{\CC}((F, \theta), (F,\theta))$ and the formality of $\Hom_{\DD}(F,F) \cong\RHom(F,F)$, respectively.

\begin{thm} \label{thm_formality_Enriques}
Let $\CC$ be an Enriques category with CY2 cover category $\DD$. Let $\DD^{\emph{dg}}$ be an equivariant DG enhancement of $\DD$ and $(\DD^{\emph{dg}})^{\Z/2\Z}$ the associated enhancement of $\DD^{\Z/2\Z}$. Let $(F, \theta)$ be an object in $\DD^{\Z/2\Z}\simeq\CC$ such that $\Hom_{\DD}(F,F)$ is formal. Then $\Hom_{\CC}((F, \theta), (F,\theta))$ is formal.   
\end{thm}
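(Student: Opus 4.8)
The strategy is to exhibit $\Hom_{\CC}((F,\theta),(F,\theta))$ as the $\Z/2\Z$-invariant part of the DG-Lie algebra $\Hom_{\DD^{\mathrm{dg}}}(\Forg(F,\theta),\Forg(F,\theta)) = \Hom_{\DD^{\mathrm{dg}}}(F,F)$, and then to invoke the formality transfer along the map induced by the inclusion of invariants, exactly as in \cite[Theorem 3.4]{Manetti2}. So the first step is to recall from Elagin's construction \cite{Elagin} the explicit description of the Hom-complexes in the equivariant enhancement $(\DD^{\mathrm{dg}})^{\Z/2\Z}$: for an object $(F,\theta)$, the complex $\Hom_{(\DD^{\mathrm{dg}})^{\Z/2\Z}}((F,\theta),(F,\theta))$ is canonically identified with the subcomplex of $\theta$-equivariant elements of $\Hom_{\DD^{\mathrm{dg}}}(F,F)$, i.e.\ the fixed points of the involution $f \mapsto \theta^{-1}\circ\tau(f)\circ\theta$ (with the appropriate sign convention making it a DG-algebra automorphism). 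The second step is to observe that this involution is a morphism of DG-Lie algebras for the bracket in \eqref{eq_bracket}, so that $\Hom_{\CC}((F,\theta),(F,\theta))$ is a DG-Lie subalgebra of $\Hom_{\DD^{\mathrm{dg}}}(F,F)$, and in fact a direct summand as a DG-Lie module over itself, because $\Z/2\Z$ is linearly reductive over $\C$ and the complement is the $(-1)$-eigenspace.

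The third and main step is the formality transfer. Since $\mathbb{Q}\subseteq\C$, the group algebra $\C[\Z/2\Z]$ is semisimple, so taking $\Z/2\Z$-invariants is an exact functor and commutes with cohomology: $H^\bullet(\Hom_{\CC}((F,\theta),(F,\theta))) = H^\bullet(\Hom_{\DD^{\mathrm{dg}}}(F,F))^{\Z/2\Z}$. Now apply \cite[Theorem 3.4]{Manetti2}: if a DG-Lie algebra $M$ carries an action of a linearly reductive (here finite) group $H$ by DG-Lie automorphisms, and $M$ is formal, then the invariant subalgebra $M^H$ is formal as well, because a chain of quasi-isomorphisms between $M$ and $H^\bullet(M)$ can be chosen $H$-equivariantly (averaging over $H$, or more precisely passing to the $H$-invariant part of the relevant $L_\infty$-structures and quasi-isomorphisms, which is legitimate by semisimplicity). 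Taking $M = \Hom_{\DD^{\mathrm{dg}}}(F,F)$, which is formal by hypothesis, and $H = \Z/2\Z$ acting via the involution from the second step, we conclude that $M^{\Z/2\Z} = \Hom_{\CC}((F,\theta),(F,\theta))$ is formal.

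\textbf{Main obstacle.} The delicate point is not the abstract transfer principle but verifying that the $\Z/2\Z$-action on $\Hom_{\DD^{\mathrm{dg}}}(F,F)$ coming from $\theta$ is genuinely an action \emph{by DG-Lie automorphisms} at the cochain level, compatibly with the identification of $\Hom_{(\DD^{\mathrm{dg}})^{\Z/2\Z}}((F,\theta),(F,\theta))$ with the honest (strict) fixed points, rather than merely the homotopy fixed points. This requires unwinding Elagin's construction of the equivariant DG category carefully: one must check that the composition and the differential in $(\DD^{\mathrm{dg}})^{\Z/2\Z}$ are literally the restrictions of those in $\DD^{\mathrm{dg}}$ to the equivariant subcomplex, with no correction terms, so that the resulting DG-Lie structure is the strict invariant one. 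Once this bookkeeping is done, the formality of $\Hom_{\CC}((F,\theta),(F,\theta))$ follows formally from \cite[Theorem 3.4]{Manetti2}. (One should also note that the hypothesis in the introduction is phrased via $\Forg\colon \CC^{\mathrm{dg}}\cong(\DD^{\mathrm{dg}})^{\Z/2\Z}\to\DD^{\mathrm{dg}}$ and the formality of $\Hom_{\DD^{\mathrm{dg}}}(\Forg(E),\Forg(E))$; since $\Forg(F,\theta)=F$, this is exactly the formality of $\Hom_{\DD}(F,F)\cong\RHom(F,F)$ assumed here.)
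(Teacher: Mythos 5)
Your proposal is correct and follows essentially the same route as the paper: you exhibit $\Hom_{\CC}((F,\theta),(F,\theta))$ as a direct summand of $\Hom_{\DD}(F,F)$ as a module over itself and then conclude by the formality transfer of \cite[Theorem 3.4]{Manetti2} (equivalently \cite[Theorem 2.3]{BMM2}). The only cosmetic difference is that you produce the splitting as the $\pm 1$-eigenspace decomposition for the strict involution coming from Elagin's construction, whereas the paper quotes the DG-level version of the splitting $\Hom_{\DD}(F,F)\cong\Hom_{\CC}((F,\theta),(F,\theta))\oplus\Hom_{\CC}((F,\theta),(F,\theta\otimes\chi))$ from the proof of \cite[Lemma 5.8]{PPZ}; these are the same decomposition.
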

\begin{proof}
For every $(F, \theta) \in \CC \simeq \DD^{\Z/2\Z}$, by the construction of \cite[Definition 8.8]{Elagin}, the forgetful functor induces a morphism of DG algebras 
$$f \colon \Hom_{(\DD^{\text{dg}})^{\Z/2\Z}}((F, \theta), (F, \theta)) \to \Hom_{\DD^{\text{dg}}}(F,F).$$
Since $\Forg$ is faithful, we have that $f$ is injective. Moreover, since $\Forg$ commutes with the composition $\circ$, it induces a morphism of DG-Lie algebras by \eqref{eq_bracket}. By the proof of \cite[Lemma 5.8]{PPZ} (which is done on the level of triangulated categories, but the argument works line by line at the DG level) there is a splitting
$$\Hom_{\DD}(F,F) \cong \Hom_{\CC}((F, \theta), (F, \theta)) \oplus \Hom_{\CC}((F, \theta), (F, \theta \otimes \chi)) \, \footnote{We denote by $\chi$ the nontrivial character of $\Z/2\Z$ and $\theta \otimes \chi$ is the linearization given by multiplication with $\chi$.}$$ 
which makes $\Hom_{\CC}((F, \theta), (F, \theta))$ a direct summand of $\Hom_{\DD}(F,F)$ as a $\Hom_{\CC}((F, \theta), (F, \theta))$-module. By the formality transfer \cite[Theorem 2.3]{BMM2} (proved originally in \cite[Theorem 3.4]{Manetti2}), we deduce the formality of $\Hom_{\CC}((F, \theta), (F,\theta))$ from that of $\Hom_{\DD}(F,F)$.
\end{proof}

As in the case of K3 surfaces, Theorem \ref{thm_formality_Enriques} allows us to describe the local structure of moduli spaces of semistable objects in Enriques categories.

\begin{prop} \label{prop_localstrmoduli_Enriques}
Under the assumptions of Theorem \ref{thm_formality_Enriques}, let $\sigma$ be a full numerical stability on an Enriques category $\CC$. Let $F$ be a $\sigma$-polystable object in $\CC$. Then there is a local analytic isomorphism
$$(M_\sigma(v), [F]) \cong (\kappa_2^{-1}(0) \sslash \Aut(F), 0)$$
of germes of analytic spaces.  
\end{prop}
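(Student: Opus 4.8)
The plan is to mimic the structure of Proposition~\ref{lemma_fromAS} (the K3 case) and deduce everything from Theorem~\ref{thm_formality_Enriques}. First I would invoke the local structure result for good moduli spaces: since $\sigma$ is a full numerical stability condition on $\CC$, the moduli stack $\MM_\sigma(v)$ admits a good moduli space $\pi\colon\MM_\sigma(v)\to M_\sigma(v)$ (as recalled in Example~\ref{ex_modulistability}, using \cite{BLM+,AHLH}), and a $\sigma$-polystable object $F$ is a closed point with stabilizer $G:=\Aut(F)$, which for a polystable object is linearly reductive (it is a product of general linear groups coming from the decomposition $F=\oplus F_i\otimes V_i$). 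Hence Lemma~\ref{lemma_localstructuremoduli} applies and gives a local analytic isomorphism
$$(M_\sigma(v),[F]) \cong (\kappa^{-1}(0)\sslash G, 0),$$
where $\kappa$ is the Kuranishi map of $\RHom_{\CC}(F,F)=\Hom_{\CC^{\mathrm{dg}}}(F,F)$ and $\kappa^{-1}(0)\sslash G=\Spec(\widehat{R}/\mathfrak{a})^{G}$ with $R=\C[\Ext^1(F,F)]$. This step is essentially a citation of the machinery already set up in Section~2 and requires only checking that the hypotheses are met in the Enriques setting.

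Next I would apply Theorem~\ref{thm_formality_Enriques}: the CY2 cover $\DD$ of $\CC$ is (in the cases of interest) the derived category of a K3 surface, so by Theorem~\ref{thm_formality} the DG-Lie algebra $\Hom_{\DD^{\mathrm{dg}}}(\Forg(F),\Forg(F))\cong\RHom(\Forg(F),\Forg(F))$ is formal (the forgetful image of a polystable object remains universally gluable with linearly reductive automorphism group, or one can directly cite the unconditional K3 formality). Theorem~\ref{thm_formality_Enriques} then transfers formality to $\Hom_{\CC^{\mathrm{dg}}}(F,F)$. Formality of a DG-Lie algebra implies the quadracity property for the associated Kuranishi map, i.e.
$$\kappa^{-1}(0)\cong\kappa_2^{-1}(0)$$
$G$-equivariantly, where $\kappa_2$ is the quadratic part of $\kappa$, given by the Yoneda square on $\Ext^1(F,F)$. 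Taking $G$-invariants (here using again that $G$ is linearly reductive, so that $(-)^{G}$ is exact and commutes with forming the quotient ring, as in Lemma~\ref{lemma_localstructuremoduli}) yields
$$\kappa^{-1}(0)\sslash G \cong \kappa_2^{-1}(0)\sslash G.$$
Combining this with the first step gives the claimed local analytic isomorphism $(M_\sigma(v),[F])\cong(\kappa_2^{-1}(0)\sslash\Aut(F),0)$.

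The one point that needs genuine care — and which I expect to be the main obstacle — is making sure the deformation-theoretic and good-moduli-space formalism of Section~2, which is stated for a full admissible subcategory $\DD\subset\Db(X)$ of the derived category of a \emph{variety}, applies verbatim to an Enriques category $\CC$, which a priori is manipulated only through its CY2 cover and an equivariant enhancement. Concretely one must check: that $\MM_\sigma(v)$ over $\CC$ is an algebraic stack with a good moduli space (this follows since $\CC$ itself is an admissible subcategory of $\Db(X)$ for the relevant Fano $X$, so Example~\ref{ex_modulistability} literally applies); that the Kuranishi map of $F\in\CC$ is identified with that of the DG-Lie algebra $\Hom_{\CC^{\mathrm{dg}}}(F,F)$ and carries a $G$-equivariant structure coming from an $\Aut(F)$-equivariant locally free replacement (this is Lemma~\ref{lemma_deformationfunctors} and the discussion in Section~\ref{sec_kuranishi}, which again applies since $\CC\subset\Db(X)$); and that the quadratic part $\kappa_2$ is the Yoneda square. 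Granting these identifications — all of which are available because $\CC$ is itself realized inside $\Db(X)$ — the proof is a formal concatenation of Lemma~\ref{lemma_localstructuremoduli}, Theorem~\ref{thm_formality_Enriques}, and the quadracity consequence of formality, exactly parallel to Proposition~\ref{lemma_fromAS}.
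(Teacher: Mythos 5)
Your proof is correct and takes essentially the same route as the paper, whose own proof is just the one-line concatenation of Lemma \ref{lemma_localstructuremoduli} (giving $(M_\sigma(v),[F])\cong(\kappa^{-1}(0)\sslash\Aut(F),0)$) with Theorem \ref{thm_formality_Enriques} (formality, hence quadracity $\kappa^{-1}(0)\cong\kappa_2^{-1}(0)$); your write-up merely makes explicit the points the paper leaves implicit (linear reductivity of $\Aut(F)$ for polystable $F$, applicability of the Section 2 machinery since $\CC\subset\Db(X)$ is admissible, $G$-equivariance of the quadracity isomorphism). Note only that the formality of $\Hom_{\DD^{\mathrm{dg}}}(\Forg(F),\Forg(F))$ is already part of the hypotheses of Theorem \ref{thm_formality_Enriques}, so your extra appeal to the K3 case of Theorem \ref{thm_formality} is not needed here (it is what Corollary \ref{cor_qdsandspecGM3} supplies in the geometric applications).
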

\begin{proof}
This follows from Lemma \ref{lemma_localstructuremoduli} and Theorem \ref{thm_formality_Enriques}.    
\end{proof}

We can apply Theorem \ref{thm_formality_Enriques} to interesting Enriques categories arising from geometry.

\begin{cor} \label{cor_qdsandspecGM3}
Let $X$ be a quartic double solid or a Gushel--Mukai threefold or fivefold. Let $\sigma$ be a Serre-invariant stability condition on the Kuznetsov component $\Ku(X)$ of $X$. Then $\RHom(E, E)$ is formal for every $\sigma$-polystable object $E \in \Ku(X)$.   
\end{cor}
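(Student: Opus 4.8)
The plan is to reduce the statement to the two main theorems already proved, with the CY2-cover construction bridging the gap. First I would recall that for each of the three families of Fano varieties in question---quartic double solids, Gushel--Mukai threefolds, and Gushel--Mukai fivefolds---the Kuznetsov component $\Ku(X)$ is an Enriques category in the sense of Definition \ref{definition-CY2-Enriques}, and its CY2 cover category $\DD$ is an admissible subcategory of $\Db(S)$ for a K3 surface $S$. For quartic double solids and special Gushel--Mukai threefolds this $S$ is literally the branch locus and the statement is Lemma \ref{lemma-Enriques-CY2} as discussed in the introduction; for ordinary (non-special) Gushel--Mukai threefolds and for Gushel--Mukai fivefolds one invokes the analogous known descriptions of the double cover category in the literature (the CY2 cover of $\Ku(X)$ is equivalent to the Kuznetsov component of an associated ordinary Gushel--Mukai variety, hence again to a subcategory of the derived category of a K3 surface in the relevant cases). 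I would state this as the first step and cite the appropriate references.

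Next I would pass to DG enhancements. By strong uniqueness of DG enhancements of $\Db(S)$ and of its admissible subcategories \cite{LunstOrlov, LPZ:enhancement}, the CY2 cover $\DD$ has a DG enhancement $\DD^{\mathrm{dg}}$ carrying the residual $\Z/2\Z$-action from Lemma \ref{lemma-Enriques-CY2}, and the associated enhancement $(\DD^{\mathrm{dg}})^{\Z/2\Z}$ is an enhancement of $\CC \simeq \DD^{\Z/2\Z}$. The key input now is Theorem \ref{thm_formality} (equivalently Theorem \ref{thm_formality_intro}): for $E \in \Ku(X)$ a $\sigma$-polystable object, $\Forg(E) \in \DD \subset \Db(S)$ satisfies $\Ext^i(\Forg(E),\Forg(E)) = 0$ for $i < 0$ (since $E$ lives in the heart of a bounded t-structure on $\Ku(X)$, or more directly since $\sigma$-semistable objects are universally gluable) and has linearly reductive automorphism group---because $E$ polystable means $E = \bigoplus_i E_i \otimes V_i$ with $E_i$ stable and pairwise non-isomorphic, so $\Aut(E) \cong \prod_i \mathrm{GL}(V_i)$ is reductive, and the forgetful functor sends this to the automorphism group of $\Forg(E)$. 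Hence $\RHom_{\DD}(\Forg(E), \Forg(E))$ is formal by Theorem \ref{thm_formality}.

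Then I would apply Theorem \ref{thm_formality_Enriques} directly: with $\DD^{\mathrm{dg}}$ the equivariant enhancement above and $(F,\theta) = E$, the hypothesis ``$\Hom_{\DD}(F,F)$ is formal'' is exactly what we just established, so the conclusion is that $\Hom_{\CC}(E,E) \cong \RHom(E,E)$ is formal. The last point to address is that formality is a property of the quasi-isomorphism type of the DG algebra and does not depend on the choice of enhancement: this is where I would invoke the result of \cite{BudurZhang} on the invariance of formality under the strong uniqueness of enhancements, together with \cite{LunstOrlov, LPZ:enhancement}, to conclude that formality of $\RHom(E,E)$ computed in any enhancement is well-defined and holds.

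The main obstacle, I expect, is not any single deep argument but rather the bookkeeping in Step 1: verifying uniformly across quartic double solids, special and ordinary Gushel--Mukai threefolds, and Gushel--Mukai fivefolds that $\Ku(X)$ is genuinely an Enriques category and that its CY2 cover is (equivalent to) an admissible subcategory of $\Db(S)$ for a K3 surface $S$---in particular handling the ordinary Gushel--Mukai cases where the K3 surface is not simply a branch divisor but arises via the known equivalences between Kuznetsov components of special and ordinary Gushel--Mukai varieties. A secondary subtlety is confirming that Serre-invariance of $\sigma$ is what guarantees the $\Z/2\Z$-equivariance needed to place $E$ in $\DD^{\Z/2\Z}$ compatibly, and that polystability is preserved appropriately under $\Forg$; both are routine but should be stated carefully.
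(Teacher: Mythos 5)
Your overall strategy (reduce to the CY2 cover, apply Theorem \ref{thm_formality} there, then transfer formality back via Theorem \ref{thm_formality_Enriques}, using \cite{BudurZhang} and strong uniqueness of enhancements to make formality enhancement-independent) matches the paper's proof, but there is a genuine gap in your first step. You assert that in all cases the CY2 cover $\DD$ of $\Ku(X)$ is an admissible subcategory of $\Db(S)$ for a K3 surface $S$, ``hence again a subcategory of the derived category of a K3 surface in the relevant cases.'' This is false (or at least unjustified) for ordinary Gushel--Mukai threefolds: there the CY2 cover is the Kuznetsov component of a special GM \emph{fourfold}, which is a noncommutative K3 category but is in general not equivalent to an admissible subcategory of the derived category of an actual K3 surface. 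Since Theorem \ref{thm_formality} is stated only for objects of $\Db(X)$ with $X$ a K3 surface, it cannot supply the formality of $\Hom_{\DD}(\Forg(E),\Forg(E))$ in that case. The paper closes this gap by importing the Arbarello--Sacc\`a formality result for polystable objects in Kuznetsov components of GM fourfolds (\cite[Remark 0.1]{AS_update}); without that extra input your argument only covers quartic double solids and special GM threefolds. Note also that the fivefold case is handled in the paper by reducing to threefolds via \cite[Theorem 1.6]{KuzPerry_cones}, and after that reduction one still must treat both ordinary and special threefolds, so the problematic case cannot be avoided.

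A secondary weak point is your appeal to strong uniqueness of enhancements to obtain a $\Z/2\Z$-\emph{equivariant} DG enhancement $\DD^{\mathrm{dg}}$ carrying the residual action. Strong uniqueness makes formality independent of the chosen enhancement, but it does not by itself produce a coherent lift of the group action to a specific enhancement, which is what Theorem \ref{thm_formality_Enriques} requires. The paper argues this concretely: the relevant involutions are geometric (the covering involutions), so the standard Drinfeld DG enhancement is equivariant, and for the residual action on $\DD$ one invokes \cite[Theorem 4.2]{Elagin}. Similarly, the transfer of polystability (and hence linear reductivity of $\Aut(\Forg(E))$) from $E$ to $\Forg(E)$ rests on the fact that a Serre-invariant $\sigma$ induces a stability condition on $\DD$ fixed by the residual action, via \cite[Theorem 4.8, Lemma 4.11]{PPZ}; this should be cited rather than dismissed as routine, and linear reductivity should be deduced from polystability of $\Forg(E)$ itself rather than from the image of $\Aut(E)$ under the forgetful functor.
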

\begin{proof}
First note that by \cite{BudurZhang}, if the underlying category has strongly unique DG enhancement, then the formality property is independent of the DG enhancement. This is the case for K3 surfaces \cite{LuntsOrlov} or $\Ku(X)$ of quartic double solids or GM varieties by \cite{LPZ:enhancement}, which gives us the freedom to choose the DG enhancements in the rest of the proof.

The Kuznetsov components of a quartic double solid and of a Gushel--Mukai variety of odd dimension are Enriques categories \cite[Section 4.3]{BP}. Moreover, to prove the statement for Gushel--Mukai variaties of odd dimension, by \cite[Theorem 1.6]{KuzPerry_cones} it is enough to show it in the threefold case.  We denote such Kuznetsov components by $\CC$. By definition, a Serre-invariant stability condition $\sigma_\CC$ on $\CC$ is fixed by the $\Z/2 \Z$-action defined by the Serre functor on $\CC$. Thus it defines a stability condition $\sigma_\DD$ on the associated CY2 cover $\DD$ which is fixed by the residual $\Z/2\Z$-action on $\DD$ by \cite[Theorem 4.8, Lemma 4.11]{PPZ}. Thus if $E$ is $\sigma_{\CC}$-polystable then $\Forg(E)$ is $\sigma_\DD$-polystable, where $\Forg \colon \CC \simeq \DD^{\Z/2\Z} \to \DD$. In particular, $\Forg(E)$ has linearly reductive automorphism group. Then $\Hom_\DD(\Forg(E), \Forg(E))$ is formal by Theorem \ref{thm_formality} in the case of K3 surfaces, and by \cite[Remark 0.1]{AS_update} for special Gushel--Mukai fourfolds.

In order to apply Theorem~\ref{thm_formality_Enriques}, we need to find a $\Z/2\Z$-equivariant DG enhancement $\DD^{dg}$ of $\DD$. For ordinary GM threefolds, the associated $\DD$ is the Kuznetsov component of a special GM fourfold, with $\Z/2\Z$ acting via the geometric involution. Then the standard DG enhancement, given by the restriction of the DG quotient of the DG category of bounded complexes over its full DG subcategory of acyclic ones \cite{Drinfeld}, is $\Z/2\Z$-equivariant. For special GM threefolds or quartic double solids, the $\Z/2\Z$-action on $\Ku(X)$ is via the geometric involution, hence the standard DG enhancement is equivariant, and by \cite[Theorem 4.2]{Elagin} this induces a DG enhancement of $\DD$ which is equivariant under the residual action. And now the statement follows from Theorem \ref{thm_formality_Enriques}.
\end{proof}

\bibliographystyle{alpha}
\bibliography{references}

\newcommand{\etalchar}[1]{$^{#1}$}
\begin{thebibliography}{BLM{\etalchar{+}}21}

\bibitem[AHLH23]{AHLH}
Jarod Alper, Daniel Halpern-Leistner, and Jochen Heinloth.
\newblock Existence of moduli spaces for algebraic stacks.
\newblock {\em Invent. Math.}, 234(3):949--1038, 2023.

\bibitem[AHR20]{AlperHallRydh}
Jarod Alper, Jack Hall, and David Rydh.
\newblock A {L}una \'{e}tale slice theorem for algebraic stacks.
\newblock {\em Ann. of Math. (2)}, 191(3):675--738, 2020.

\bibitem[Alp13]{Alper}
Jarod Alper.
\newblock Good moduli spaces for {A}rtin stacks.
\newblock {\em Ann. Inst. Fourier (Grenoble)}, 63(6):2349--2402, 2013.

\bibitem[AS18]{AS}
Enrico Arbarello and Giulia Sacc\`a.
\newblock Singularities of moduli spaces of sheaves on {K}3 surfaces and {N}akajima quiver varieties.
\newblock {\em Adv. Math.}, 329:649--703, 2018.

\bibitem[AS23]{AS_update}
Enrico Arbarello and Giulia Sacc\`a.
\newblock Singularities of {B}ridgeland moduli spaces for {K}3 categories: an update.
\newblock {\em arXiv:2307.07789}, 2023.

\bibitem[BLM{\etalchar{+}}21]{BLM+}
Arend Bayer, Mart\'{\i} Lahoz, Emanuele Macr\`{i}, Howard Nuer, Alexander Perry, and Paolo Stellari.
\newblock Stability conditions in families.
\newblock {\em Publ. Math. Inst. Hautes \'{E}tudes Sci.}, 133:157--325, 2021.

\bibitem[BMM21]{BMM2}
Ruggero Bandiera, Marco Manetti, and Francesco Meazzini.
\newblock Formality conjecture for minimal surfaces of {K}odaira dimension 0.
\newblock {\em Compos. Math.}, 157(2):215--235, 2021.

\bibitem[BMM22]{BMM1}
Ruggero Bandiera, Marco Manetti, and Francesco Meazzini.
\newblock Deformations of polystable sheaves on surfaces: quadraticity implies formality.
\newblock {\em Mosc. Math. J.}, 22(2):239--263, 2022.

\bibitem[BP23]{BP}
Arend Bayer and Alexander Perry.
\newblock Kuznetsov's {F}ano threefold conjecture via {K}3 categories and enhanced group actions.
\newblock {\em J. Reine Angew. Math.}, 800:107--153, 2023.

\bibitem[BZ19]{BudurZhang}
Nero Budur and Ziyu Zhang.
\newblock Formality conjecture for {K}3 surfaces.
\newblock {\em Compos. Math.}, 155(5):902--911, 2019.

\bibitem[Coh16]{Cohn}
Lee Cohn.
\newblock Differential graded categories are k-linear stable infinity categories.
\newblock {\em arXiv:1308.2587}, 2016.

\bibitem[Dav21]{Davison}
Ben Davison.
\newblock Purity and 2-{C}alabi-{Y}au categories.
\newblock {\em arXiv:2106.07692}, 2021.

\bibitem[Dri04]{Drinfeld}
Vladimir Drinfeld.
\newblock D{G} quotients of {DG} categories.
\newblock {\em J. Algebra}, 272(2):643--691, 2004.

\bibitem[Ela15]{Elagin}
Alexey Elagin.
\newblock On equivariant triangulated categories.
\newblock {\em arXiv:1403.7027}, 2015.

\bibitem[FIM12]{FIM}
Domenico Fiorenza, Donatella Iacono, and Elena Martinengo.
\newblock Differential graded {L}ie algebras controlling infinitesimal deformations of coherent sheaves.
\newblock {\em J. Eur. Math. Soc. (JEMS)}, 14(2):521--540, 2012.

\bibitem[FMM12]{FMM}
Domenico Fiorenza, Marco Manetti, and Elena Martinengo.
\newblock Cosimplicial {DGLA}s in deformation theory.
\newblock {\em Comm. Algebra}, 40(6):2243--2260, 2012.

\bibitem[KL07]{KaledinLehn}
D.~Kaledin and M.~Lehn.
\newblock Local structure of hyperk\"{a}hler singularities in {O}'{G}rady's examples.
\newblock {\em Mosc. Math. J.}, 7(4):653--672, 766--767, 2007.

\bibitem[KP23]{KuzPerry_cones}
Alexander Kuznetsov and Alexander Perry.
\newblock Categorical cones and quadratic homological projective duality.
\newblock {\em Ann. Sci. \'{E}c. Norm. Sup\'{e}r. (4)}, 56(1):1--57, 2023.

\bibitem[Kuz11]{Kuz_basechange}
Alexander Kuznetsov.
\newblock Base change for semiorthogonal decompositions.
\newblock {\em Compos. Math.}, 147(3):852--876, 2011.

\bibitem[Lie06]{Lieblich}
Max Lieblich.
\newblock Moduli of complexes on a proper morphism.
\newblock {\em J. Algebraic Geom.}, 15(1):175--206, 2006.

\bibitem[LO10]{LuntsOrlov}
Valery~A. Lunts and Dmitri~O. Orlov.
\newblock Uniqueness of enhancement for triangulated categories.
\newblock {\em J. Amer. Math. Soc.}, 23(3):853--908, 2010.

\bibitem[LPZ22]{LPZ:elliptic}
Chunyi Li, Laura Pertusi, and Xiaolei Zhao.
\newblock Elliptic quintics on cubic fourfolds, {O}'{G}rady 10, and {L}agrangian fibrations.
\newblock {\em Adv. Math.}, 408:Paper No. 108584, 56, 2022.

\bibitem[LPZ23]{LPZ:enhancement}
Chunyi Li, Laura Pertusi, and Xiaolei Zhao.
\newblock Derived categories of hearts on {K}uznetsov components.
\newblock {\em Journal of the London Mathematical Society}, 108(6):2085--2490, 2023.

\bibitem[LS06]{LehnSorger}
Manfred Lehn and Christoph Sorger.
\newblock La singularit\'{e} de {O}'{G}rady.
\newblock {\em J. Algebraic Geom.}, 15(4):753--770, 2006.

\bibitem[Man99]{Manetti}
Marco Manetti.
\newblock Deformation theory via differential graded {L}ie algebras.
\newblock In {\em Algebraic {G}eometry {S}eminars, 1998--1999 ({I}talian) ({P}isa)}, pages 21--48. Scuola Norm. Sup., Pisa, 1999.

\bibitem[Man15]{Manetti2}
Marco Manetti.
\newblock On some formality criteria for {DG}-{L}ie algebras.
\newblock {\em J. Algebra}, 438:90--118, 2015.

\bibitem[Man16]{Manetti_book}
Marco Manetti.
\newblock Lie methods in deformation theory.
\newblock {\em Not published}, 2016.

\bibitem[Mat16]{Mathew}
Akhil Mathew.
\newblock The {G}alois group of a stable homotopy theory.
\newblock {\em Adv. Math.}, 291:403--541, 2016.

\bibitem[PPZ23]{PPZ}
Alexander Perry, Laura Pertusi, and Xiaolei Zhao.
\newblock Moduli spaces of stable objects in {E}nriques categories.
\newblock {\em arXiv:2305.10702}, 2023.

\bibitem[Zha12]{Zhang}
Ziyu Zhang.
\newblock A note on formality and singularities of moduli spaces.
\newblock {\em Mosc. Math. J.}, 12(4):863--879, 885, 2012.

\end{thebibliography}

\Addresses

\end{document}